\newtheorem{theorem}{Theorem}[section]
\newtheorem{corollary}[theorem]{Corollary}
\newtheorem{lemma}[theorem]{Lemma}
\newtheorem{proposition}[theorem]{Proposition}
\theoremstyle{definition}
\newtheorem{remark}[theorem]{Remark}
\numberwithin{equation}{section}
\title[nonlocal parabolic equation]{Finite time blow-up and global solutions for a nonlocal parabolic equation at high energy level}
\author[X. Li]{Xiaoliang Li}
\author[B. Liu]{Baiyu Liu}
\address[X. Li, B. Liu]{School of Mathematics and Physics\\
	University of Science and Technology Beijing \\
	30 Xueyuan Road, Haidian District
	Beijing, 100083\\
	P.R. China}
\email{{\tt liuby@ustb.edu.cn}}
\keywords{Nonlocal parabolic equation; Blow up; Global existence; High energy level}
\subjclass[2010]{35B44,35K55}
\begin{document}

\begin{abstract}
	In this paper, we consider the solution of a nonlocal parabolic equation. Focusing on the solutions with initial data at high energy level, we find the criteria for global existence and finite time blow up for the corresponding solution respectively. Moreover, we prove that there always exists blow up solution with negative Nehari functional no matter how large the energy is.
\end{abstract}

\maketitle

%%%%%%%%%%%%%%%%%%%%%%%%%%%%%%%%%%%%%%%%%%%%%%%%%%%%%%%%%%%%%%%%%%%%%%%%%%%%%%%%

\section{Introduction}\label{intro}
In this paper, we consider the following initial boundary value problem of nonlocal parabolic equation
\begin{equation}
\label{eq:pro}
\left\{
\begin{array}{ll}
u_t=\Delta u+\left(\frac{1}{|x|^{n-2}}*|u|^p\right)|u|^{p-2}u,& x\in \Omega, t>0\\
u(x,t)=0,& x\in \partial \Omega, t>0\\
u(x,0)=u_0(x),& x\in \Omega
\end{array}
\right.
\end{equation}
where $\Omega$ is a bounded smooth domain of $\mathbb{R}^n$ ($n\geq3$), $1< p<\frac{n+2}{n-2}$ and $\frac{1}{|x|^{n-2}}*|u|^p=\int_{\Omega}\frac{|u(y)|^p}{|x-y|^{n-2}}dy$. This equation can be applied to thermal physics with nonlocal source and model population dynamics, in which cases the individuals are competing not only with others at their own point in space but also with the individual at other points in the domain. For other nonlocal parabolic type equations used in physics and ecology, one can see \cite{Furter, Gourley, Lacey, Ou2007, So2012} and the references therein.

In the sub-critical case  $1<p<(n+2)/(n-2)$, the second author and Ma proved that (\ref{eq:pro}) is locally well-posed in Lebesgue space and possess a variational structure \cite{LiuB2014}. The energy functional
$$
J(u)=\frac{1}{2}\int_\Omega |\nabla u|^2dx-\frac{1}{2p}\int_{\Omega\times \Omega}\frac{|u(y)|^p|u(x)|^p}{|x-y|^{n-2}}dxdy,
$$ 
is nonincreasing along the flow of (\ref{eq:pro}). More precisely, there holds $$\frac{d}{dt}J(u(t))=-\int_{\Omega}u_t^2(t)\,dx,$$ where $u(t)$ is the solution of (\ref{eq:pro}). 
Moreover, using the potential well method \cite{Payne, Sattinger}, in \cite{LiuB2014} the second author and Ma found that for a given low energy initial data $u_0$  (smaller than the depth of potential well), whether the corresponding solution of (\ref{eq:pro}) is global or blow up in finite time is determined by the Nehari functional 
$$
I(u)=(J'(u),u)=\int_\Omega |\nabla u|^2dx-\int_{\Omega}\left(\frac{1}{|x|^{n-2}}*|u|^p\right)|u|^p\,dx.
$$
In \cite{LiLiu2016}, we extended above results to the critical energy level initial data and established the asymptotic behavior results. That is to say, for a regular initial value $u_0\in C(\bar{\Omega})\cap H_0^1(\Omega)$, we have 
\begin{itemize}
	\item If $J(u_0)\leq d$ and $I(u_0)>0$, then the solution of (\ref{eq:pro}) is global and decays to 0 exponentially as $ t\to\infty$.
	\item If $J(u_0)\leq d$ and $I(u_0)<0$, then the solution of (\ref{eq:pro}) blows up in finite time and the norm of it increases exponentially.
\end{itemize}
(See \cite{LiuB2014, LiLiu2016} for more details.)
Here $d$ can be interpreted as the depth of the potential well, i.e.
\begin{equation}
\label{def:d}
d=\inf\{J(u):u\in H_0^1(\Omega)\setminus\{0\},I(u)=0\}. 
\end{equation}

Equation (\ref{eq:pro}) is the nonlocal version of the classical semi-linear parabolic problem
\begin{equation}
\label{eq:sca}
\left\{
\begin{array}{ll}
u_t=\Delta u+|u|^{p-1}u,& x\in \Omega, t>0,\\
u(x,t)=0,& x\in \partial \Omega, t>0,\\
u(x,0)=u_0(x),& x\in \Omega,
\end{array}
\right.
\end{equation}
which has been studied extensively (see \cite{Filip2005, Dick2011, Payne, Quittner2003} and the references therein). In the lower energy level, whether the solution of (\ref{eq:sca}) is global existence or blows up in finite is totally determined by the Nehari functional. While in the high energy level,  Gazzola and Weth \cite{Filip2005}, Dickstein et.al \cite{Dick2011} have shown that either positive or negative Nehari functional is not sufficient for finite time blowup or global existence of the solutions of (\ref{eq:sca}). They also give some criteria for the solution to be global or blow up in finite at high energy level.

It is natural to seek the sufficient conditions for the solution of (\ref{eq:pro}) to be global or blow up in finite at high energy level $J(u_0)>d$, that is also the motivation of this present paper.

To state our main results, we introduce some notions.  
Let
\begin{align*}
&\mathcal{G}_0 =\{u_0\in H_0^1(\Omega):T_{\max}(u_0)=\infty \quad \mathrm{and}\quad u(t)\to 0, \textrm{as}\ t\to \infty \},\\
&\mathcal{B} =\{u_0\in H_0^1(\Omega): T_{\max}(u_0)<\infty \},\\
&\mathcal{N}:=\{u\in H_0^1(\Omega)\backslash \{0\}:I(u)=0\}.
\end{align*}
We define $$\mathcal{N}_{+}=\{u\in H_0^1(\Omega):I(u)>0\}\ \mathrm{and}\ \mathcal{N}_{-}=\{u\in H_0^1(\Omega):I(u)<0\}.$$
For a given number $a>d$ ($d$ is defined as in (\ref{def:d})), define $$\mathcal{N}_a:=\{u\in H_0^1(\Omega)\backslash \{0\}:I(u)=0, J(u)<a\}.$$ 
Note that
\begin{equation}
\label{eq:J-I}
J(u)=\frac{1}{2p}I(u)+\left(\frac12-\frac{1}{2p}\right)||\nabla u||^2,
\end{equation}
which gives $$\mathcal{N}_a\equiv\Big{\{}u\in H_0^1(\Omega)\backslash \{0\}:I(u)=0,||\nabla u||<\sqrt{\frac{2pa}{p-1}}\Big{\}}.$$
Denote $$\lambda_a=\inf_{u\in\mathcal{N}_a}||u||,\quad \Lambda_a=\sup_{u\in\mathcal{N}_a}||u||.$$

Our main result shows that for those initial datum with small $L^2$ norms and the associated Nehari functionals are positive, the corresponding solutions of problem (\ref{eq:pro}) are global, while for those initial datum that are sufficiently large and the associated Nehari functionals are negative, the solutions will blow up in finite time. 
\begin{theorem}
	\label{thm:criterions}
	Let $\Omega$ be a smooth bounded convex domain and $1<p<p_{nl}:=\frac{2n}{(n-1)(n-2)},\quad n\geq 3$. If $u_0\in C(\bar{\Omega})\cap\mathcal{N}_{+}$ and $||u_0||\leq\lambda_{J(u_0)}$, then $u_0\in\mathcal{G}_0$. If $u_0\in C(\bar{\Omega})\cap\mathcal{N}_{-}$ and $||u_0||\geq\Lambda_{J(u_0)}$, then $u_0\in\mathcal{B}$.
\end{theorem}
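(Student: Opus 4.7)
The proof splits into two parallel arguments. Both rest on the two identities
$$\frac{d}{dt}J(u(t)) = -\|u_t(t)\|^2, \qquad \frac{1}{2}\frac{d}{dt}\|u(t)\|^2 = -I(u(t)),$$
the second obtained by testing (\ref{eq:pro}) against $u$. In each case the key step is to show that the sign of $I(u(t))$ is preserved throughout the maximal existence interval.

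For the global existence assertion, suppose for contradiction that $t_0$ is the first positive time at which $I(u(t_0))=0$. On $[0,t_0)$ the Nehari identity forces $\|u(t)\|^2$ to be strictly decreasing, while strict energy dissipation (since $I(u_0)\neq 0$ rules out $u$ being stationary) gives $J(u(t_0))<J(u_0)$. Hence $u(t_0)\in\mathcal{N}_{J(u_0)}$, so $\|u(t_0)\|\geq\lambda_{J(u_0)}\geq\|u_0\|$, contradicting the strict decrease. Thus $I(u(t))>0$ on $[0,T_{\max})$, and (\ref{eq:J-I}) yields the uniform bound $\|\nabla u(t)\|^2\leq \frac{2p}{p-1}J(u_0)$; the standard subcritical theory then gives $T_{\max}=\infty$. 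Decay to $0$ follows from an $\omega$-limit argument: $\int_0^\infty \|u_t\|^2\,dt<\infty$ produces a sequence $t_k\to\infty$ along which $u(t_k)$ converges to a stationary limit $u^\ast$ satisfying $I(u^\ast)=0$. If $u^\ast\neq 0$ then $u^\ast\in\mathcal{N}_{J(u_0)}$, forcing $\|u^\ast\|\geq\lambda_{J(u_0)}\geq\|u_0\|$, contradicting the strict monotonicity of $\|u(t)\|$. Hence $u^\ast=0$ along every such subsequence, so $u(t)\to 0$.

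The blow-up assertion uses a mirrored invariance: were $t_0$ the first positive time at which $I(u(t_0))=0$, with $I<0$ beforehand, then $\|u(t)\|^2$ would strictly increase on $[0,t_0)$, yet $u(t_0)\in\mathcal{N}_{J(u_0)}$ would give $\|u(t_0)\|\leq\Lambda_{J(u_0)}\leq\|u_0\|$, a contradiction. Thus $I(u(t))<0$ throughout $[0,T_{\max})$. To extract a finite blow-up time I plan to apply a Levine-type concavity method: set $L(t)=\int_0^t\|u(s)\|^2\,ds+(T-t)\|u_0\|^2+\beta(t+\tau)^2$ with parameters $T,\beta,\tau$ to be tuned, compute $L''(t)=-2I(u(t))=2(p-1)\|\nabla u\|^2-4pJ(u(t))$ via (\ref{eq:J-I}), replace $J(u(t))$ using $J(u(t))=J(u_0)-\int_0^t\|u_s\|^2\,ds$, and combine with the Cauchy-Schwarz estimate
$$\bigl(L'(t)-\|u_0\|^2-2\beta\tau\bigr)^2\leq 4\Bigl(\int_0^t\|u\|^2\,ds+\beta(t+\tau)^2\Bigr)\Bigl(\int_0^t\|u_s\|^2\,ds+\beta\Bigr)$$
to verify $LL''-(1+\alpha)(L')^2\geq 0$ for some $\alpha>0$. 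Then $(L^{-\alpha})''\leq 0$, so $L^{-\alpha}$ reaches $0$ in finite time, forcing $T_{\max}<\infty$.

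The principal obstacle will be closing the concavity inequality above: in the high energy regime $J(u_0)>d$, one cannot absorb $-4pJ(u_0)$ directly into the homogeneous part, so the balance must be obtained by exploiting the strict lower bound $\|u(t)\|\geq\|u_0\|\geq\Lambda_{J(u_0)}$ together with a judicious choice of $\beta$ and $\tau$. The auxiliary hypotheses (convexity of $\Omega$ and $p<p_{nl}$) appear inessential to the invariance and energy estimates themselves; I anticipate they enter through Rellich-Pohozaev identities and Hardy-Littlewood-Sobolev bounds needed to handle the nonlocal nonlinearity in the regularity step of global existence and in the compactness argument identifying $\omega$-limit points as stationary solutions.
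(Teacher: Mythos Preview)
Your global-existence half is essentially the paper's argument: invariance of $\mathcal N_+$ via the $L^2$-monotonicity, the $H_0^1$-bound from (\ref{eq:J-I}), and identification of the $\omega$-limit set with $\{0\}$ by showing that any nontrivial limit would lie in $\mathcal N_{J(u_0)}$ with too small an $L^2$-norm.

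The blow-up half is where you diverge from the paper, and where there is a genuine gap. After establishing invariance of $\mathcal N_-$ (which you do correctly, and identically to the paper), the paper does \emph{not} run a Levine concavity argument. It reuses the $\omega$-limit machinery symmetrically: assuming $T_{\max}=\infty$, Theorem~\ref{thm:equilibria} supplies a nonempty $\omega(u_0)$ consisting of equilibria; any $\varphi\in\omega(u_0)$ satisfies $\|\varphi\|>\|u_0\|\geq\Lambda_{J(u_0)}$ and $J(\varphi)<J(u_0)$, so $\varphi\notin\mathcal N$; and the uniform lower bound $\|\nabla u(t)\|\geq C$ from Lemma~\ref{lem:2.1} (valid whenever $I\leq 0$) rules out $\varphi=0$. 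Hence $\omega(u_0)=\varnothing$, a contradiction. This is the Gazzola--Weth mechanism, and it is precisely here that the hypothesis $p<p_{nl}$ enters, through the compactness/regularity needed for Theorem~\ref{thm:equilibria}.

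Your concavity route is unlikely to close as stated. The obstruction you flag is real: to absorb $-4pJ(u_0)$ one needs, via Poincar\'e, something like $(p-1)\lambda_1\|u(t)\|^2>2pJ(u_0)$. But your only lower bound is $\|u(t)\|\geq\Lambda_{J(u_0)}$, and the definition of $\mathcal N_{J(u_0)}$ together with Poincar\'e gives
\[
\Lambda_{J(u_0)}^2\;\leq\;\frac{1}{\lambda_1}\sup_{u\in\mathcal N_{J(u_0)}}\|\nabla u\|^2\;\leq\;\frac{2p}{(p-1)\lambda_1}\,J(u_0),
\]
so the available lower bound sits on the \emph{wrong} side of the threshold needed to make $L''$ nonnegative. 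No choice of $\beta,\tau$ repairs this deficit; the concavity computation hidden in Proposition~\ref{pro:bound-J} requires exactly that strict inequality, which your hypothesis does not supply. Replace the concavity plan by the mirrored $\omega$-limit contradiction above and the proof goes through.
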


The following result exhibits the existence of a class of initial data in region $\mathcal{N}_{-}\cap\mathcal{B}$ with arbitrarily high energy which gives rise to blow up.
\begin{theorem}
	\label{thm:blow-up for M}
	Let $\Omega$ be a smooth bounded convex domain and $2<p<3$, $n=3$.
	For any $M>0$ there exists $u_M\in \mathcal{N}_{-}$ such that $J(u_M)\geq M$ and $u_M\in\mathcal{B}$.
\end{theorem}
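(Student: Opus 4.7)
The plan is to reduce the question to Theorem \ref{thm:criterions}: for each $M>0$ it is enough to build $u_M\in C(\bar{\Omega})\cap\mathcal{N}_{-}$ with $J(u_M)\geq M$ and $\|u_M\|_{L^2}\geq\Lambda_{J(u_M)}$, and I would construct such a $u_M$ as a scaled high-frequency Dirichlet eigenfunction. Together with (\ref{eq:J-I}), the identity $I(u)=0$ gives $\|\nabla u\|^2=2pJ(u)/(p-1)$ on $\mathcal{N}$, so the Poincar\'e inequality $\lambda_1\|u\|_{L^2}^2\leq\|\nabla u\|^2$ (with $\lambda_1$ the first Dirichlet eigenvalue of $-\Delta$ on $\Omega$) yields $\Lambda_a^2\leq 2pa/((p-1)\lambda_1)$; hence the blow-up criterion from Theorem \ref{thm:criterions} is implied by the cleaner condition $(p-1)\lambda_1\|u_M\|_{L^2}^2\geq 2pJ(u_M)$.

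For the construction, let $\phi_k\in C(\bar{\Omega})$ denote the $k$-th Dirichlet eigenfunction of $-\Delta$ with $\|\phi_k\|_{L^2}=1$, and put $B_k:=\int_{\Omega\times\Omega}|\phi_k(x)|^p|\phi_k(y)|^p/|x-y|\,dx\,dy$. For $\beta>0$ and $k\geq 2$, the function $u=\beta\phi_k$ satisfies $I(u)=\beta^2\lambda_k-\beta^{2p}B_k$ and $J(u)=\beta^2\lambda_k/2-\beta^{2p}B_k/(2p)$. I would pick $\beta=\beta_k>0$ as the unique solution of $\beta^{2p-2}B_k=p\lambda_k-(p-1)\lambda_1$; substituting this back gives
\[
I(\beta_k\phi_k)=-(p-1)\beta_k^2(\lambda_k-\lambda_1)<0,\qquad J(\beta_k\phi_k)=\frac{(p-1)\lambda_1}{2p}\beta_k^2=\frac{(p-1)\lambda_1}{2p}\|\beta_k\phi_k\|_{L^2}^2,
\]
so that $\beta_k\phi_k\in\mathcal{N}_{-}$, the reduced criterion is met with equality, and $\beta_k\phi_k\in\mathcal{B}$ by Theorem \ref{thm:criterions}.

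It then remains to verify that $J(\beta_k\phi_k)=(p-1)\lambda_1\beta_k^2/(2p)\to\infty$, equivalently $\lambda_k/B_k\to\infty$. By the Hardy-Littlewood-Sobolev inequality in $\mathbb{R}^3$, $B_k\leq C\|\phi_k\|_{L^{6p/5}}^{2p}$; since $2<p<3$ gives $6p/5\in(12/5,18/5)\subset[2,4]$, Sogge's $L^q$ bound for Dirichlet eigenfunctions in dimension three (in the subcritical range $q\leq 2(n+1)/(n-1)=4$) furnishes $\|\phi_k\|_{L^{6p/5}}\leq C\lambda_k^{1/4-5/(12p)}$, whence $B_k\leq C\lambda_k^{p/2-5/6}$ and $\lambda_k/B_k\geq c\lambda_k^{11/6-p/2}\to\infty$, since the exponent $11/6-p/2$ is strictly positive throughout $(2,3)$. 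Taking $k$ large enough so that $J(\beta_k\phi_k)\geq M$ then delivers the required $u_M$.

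The principal obstacle is this sharp eigenfunction estimate: the crude interpolation between $\|\phi_k\|_{L^2}=1$ and H\"ormander's pointwise bound $\|\phi_k\|_{L^\infty}\lesssim\lambda_k^{(n-1)/4}$ produces only $B_k\lesssim\lambda_k^{p-5/3}$, which leaves $\lambda_k/B_k$ unbounded solely for $p<8/3$ and thus fails on the upper part of $(2,3)$; it is Sogge's improvement in the Stein-Tomas range (available precisely because $6p/5\leq 4$, i.e.\ $p\leq 10/3$) that makes the full interval $2<p<3$ of the theorem reachable, explaining both the dimensional hypothesis $n=3$ and the upper bound on $p$.
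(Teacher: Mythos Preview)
Your argument is correct but follows a genuinely different route from the paper. The paper avoids spectral theory entirely: it first shows, via H\"older and the crude pointwise bound $|x-y|^{-1}\geq\gamma^{-1}$ on $\Omega\times\Omega$, that the single inequality $\|u_0\|^{2p}\geq\frac{2p}{p-1}|\Omega|^{p-2}\gamma\,J(u_0)$ already forces $u_0\in\mathcal{N}_-\cap\mathcal{B}$, and then builds $u_M=\alpha v+w$ with $v,w$ supported in \emph{disjoint} subdomains $\Omega_1,\Omega_2$. The piece $\alpha v$ is scaled so large that $J(\alpha v)\leq 0$ and $\|\alpha v\|^{2p}$ dominates the criterion, while $w\in C_0^1(\Omega_2)$ is chosen bounded in $L^\infty$ but with arbitrarily large Dirichlet energy, tuned so that $J(u_M)=M$; because the supports are disjoint, $J(\alpha v+w)$ splits as $J(\alpha v)+J(w)$ minus a single nonlocal cross term controlled by HLS, so only elementary inequalities enter. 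Your single-eigenfunction family $\beta_k\phi_k$ is neater and lands exactly on the threshold $(p-1)\lambda_1\|u\|^2=2pJ(u)$, but the price is the subcritical spectral-cluster bound $\|\phi_k\|_q\lesssim\lambda_k^{1/4-1/(2q)}$ for $q\leq 4$ on a domain with boundary; that is not Sogge's 1988 result for closed manifolds but its extension to manifolds with boundary due to Smith--Sogge (\emph{Acta Math.}\ 2007), and you should cite it explicitly rather than attribute it to Sogge alone. The paper's route buys complete self-containedness; yours buys an explicit one-parameter family of high-energy blow-up data and a transparent reason why the hypotheses $n=3$ and $p<3$ are natural.
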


The remainder of the paper is organized as follows. In the next section, we establish the existence of positive stationary solutions of problem (\ref{eq:pro}). Some preliminaries for the global solutions will be given in section \ref{sec:4}. In section \ref{sec:5}, we give the proofs of Theorem \ref{thm:criterions} and Theorem \ref{thm:blow-up for M}.

Throughout the paper, we assume $\Omega\subset\mathbb{R}^n,\  n\geq 3$ is a bounded domain of class $C^{2+\alpha}$ for some $\alpha\in (0,1)$. We denote $z(u)=\frac{1}{|x|^{n-2}}*|u|^p$, $||\cdot ||_p=||\cdot ||_{L^p(\Omega)}, ||\cdot||=||\cdot||_2$ and $T_{\max}$ is the maximal existence time of solution of problem (\ref{eq:pro}), and let $$p_{nl}:=\frac{2n}{(n-1)(n-2)},\quad n\geq 3.$$

\section{The existence of positive stationary solution}\label{sec:3}

In this section, we consider the stationary problem of (\ref{eq:pro})
\begin{equation}
\label{eq:spro}
\left\{
\begin{array}{ll}
-\Delta u=\left(\frac{1}{|x|^{n-2}}*|u|^p\right)|u|^{p-2}u,& x\in \Omega,\\
u(x)=0,& x\in \partial \Omega.
\end{array}
\right.
\end{equation}
We obtain the following existence result of the positive stationary solution.
\begin{theorem}
	\label{thm:solu-exist}
	Assume $1< p<p_{nl}:=\frac{2n}{(n-1)(n-2)}$, then there exists a positive classical solution $u\in C^2(\bar{\Omega})$ for stationary problem (\ref{eq:spro}). 
\end{theorem}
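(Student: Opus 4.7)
The strategy is to obtain a weak nonnegative solution of~(\ref{eq:spro}) by minimizing $J$ over the Nehari manifold $\mathcal{N}=\{u\in H_0^1(\Omega)\setminus\{0\}:I(u)=0\}$ and then to upgrade it to a positive classical solution through elliptic regularity. Set $B(w):=\iint_{\Omega\times\Omega}|w(x)|^p|w(y)|^p|x-y|^{-(n-2)}\,dx\,dy$, so that $I(w)=\|\nabla w\|^2-B(w)$. For any $w\in H_0^1\setminus\{0\}$, the fibre map $t\mapsto I(tw)=t^2\|\nabla w\|^2-t^{2p}B(w)$ has a unique positive zero (as $p>1$), so $\mathcal{N}\neq\emptyset$. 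The Hardy--Littlewood--Sobolev inequality gives $B(u)\leq C\|u\|_{2np/(n+2)}^{2p}$; since $p<p_{nl}<(n+2)/(n-2)$ the exponent $2np/(n+2)$ sits strictly below $2^\ast:=2n/(n-2)$, and Sobolev embedding turns this into $B(u)\leq C'\|\nabla u\|^{2p}$. Combined with the identity $J|_{\mathcal{N}}=\frac{p-1}{2p}\|\nabla\cdot\|^2$ coming from~(\ref{eq:J-I}), this forces $d:=\inf_{\mathcal{N}}J>0$.

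Let $\{u_k\}\subset\mathcal{N}$ be a minimizing sequence. Since $J$ and $I$ are unchanged under $u\mapsto|u|$, I may take $u_k\geq 0$; the identity for $J|_{\mathcal{N}}$ yields an $H_0^1$ bound, so along a subsequence $u_k\rightharpoonup u\geq 0$ in $H_0^1(\Omega)$ and $u_k\to u$ strongly in every $L^q$ with $q<2^\ast$. The decisive compactness step is weak continuity of $B$: decomposing $B(u_k)-B(u)$ as a bilinear remainder and applying HLS reduces the question to strong convergence in $L^{2np/(n+2)}$, which is available by subcriticality. Hence $I(u)\leq\liminf I(u_k)=0$. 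If $u\equiv 0$ then $B(u_k)\to 0$, forcing $\|\nabla u_k\|\to 0$ in contradiction to $d>0$; if instead $I(u)<0$ strictly, some $t_\ast\in(0,1)$ gives $t_\ast u\in\mathcal{N}$ with $J(t_\ast u)=\frac{p-1}{2p}t_\ast^2\|\nabla u\|^2<d$, another contradiction. Thus $I(u)=0$ and weak lower semicontinuity of the Dirichlet integral yields $J(u)=d$.

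A Lagrange-multiplier argument on the $C^1$-constraint $I=0$ -- using $\langle I'(u),u\rangle=(2-2p)\|\nabla u\|^2\neq 0$ to kill the multiplier -- shows $J'(u)=0$, so $u\geq 0$ is a weak solution of~(\ref{eq:spro}). For classical regularity, the choice $p<p_{nl}$ is tuned so that $|u|^p\in L^{2n/(p(n-2))}$ with exponent strictly greater than $n/2$, whence a direct H\"older bound on the Newton potential already gives $z(u)\in L^\infty(\Omega)$ from $u\in H_0^1$. A standard $W^{2,s}$-bootstrap then yields $u\in L^\infty(\Omega)$, after which $z(u)\in C^{1,\alpha}(\overline{\Omega})$ and Schauder theory (using the $C^{2,\alpha}$ boundary) deliver $u\in C^{2,\alpha}(\overline{\Omega})$; the strong maximum principle and Hopf lemma finally promote $u\geq 0$ to $u>0$ in $\Omega$. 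The principal technical obstacle is the weak continuity of the Choquard term $B$; once that is available, the variational and regularity steps are essentially routine.
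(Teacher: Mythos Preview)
Your proof is correct but follows a different path from the paper's.  The paper does \emph{not} work on the Nehari manifold: instead it minimises the Dirichlet energy $\Psi(u)=\tfrac12\|\nabla u\|^2$ over the single constraint $\Phi(u):=\tfrac{1}{2p}B(u)=1$, obtains a nonnegative minimiser $w$ by weak sequential closedness of this set (proved via the same HLS/compactness mechanism you invoke), and then uses the Lagrange multiplier rule together with a rescaling $u=\mu^{1/(2p-2)}w$ to land on a critical point of $J$.  Your Nehari argument needs the extra dichotomy step excluding $I(u)<0$, but it has the payoff of producing the ground-state level $d$ directly, which the paper uses elsewhere anyway.

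On the regularity side your route is in fact cleaner than the paper's.  The paper runs an $L^1$--$L^{\rho^i}$ bootstrap on the full nonlinearity $z(u)|u|^{p-2}u$ (exploiting only $p<\tfrac{n}{n-2}$) to reach $u\in C^{\alpha}(\bar\Omega)$, and then spends a page verifying by hand that $z(u)$ and $|u|^{p-2}u$ are individually H\"older so that Schauder applies.  You instead notice that $p<p_{nl}$ already places $|u|^p\in L^s$ with $s>n/2$ from Sobolev alone, so $z(u)\in L^\infty$ immediately and the remaining equation behaves like a local subcritical problem $-\Delta u=a(x)|u|^{p-2}u$ with bounded coefficient; this collapses the paper's Step~3 to a standard $W^{2,q}$/Schauder iteration.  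Both arguments are sound; yours makes sharper use of the specific threshold $p_{nl}$.
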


The following two lemmas will be used in our proof. We include them here for the readers' convenience. The first lemma is a $L^q$-estimate for the nonlinear term $z(u)|u|^{p-2}u$, where $z(u)=\frac{1}{|x|^{n-2}}*|u|^p$. 
\begin{lemma}[Lemma 4 in \cite{LiuB2014}]
	\label{lem:estimation-f}
	For $q\geq n-1$, then nonlinear term $z(u)|u|^{p-2}u$ in (\ref{eq:pro}) satisfies the following estimates.
	\begin{enumerate}
		\item[(i)] If $1<p<2$, then there is a constant $c=c(\Omega,p,q)$ such that for all $u,v\in L^{pq}(\Omega)$
		\begin{align*}
		||z(u)|u|^{p-2}u-z(v)|v|^{p-2}v||_q&\leq c(\Omega,p,q)\{(||u||_{pq}^{2p-2}+\\
		&||u||_{pq}^{p-1}||v||_{pq}^{p-1})||u-v||_{pq}+||v||_{pq}^p||u-v||_{pq}^{p-1}\}.
		\end{align*}
		\item[(ii)] If $p\geq2$, then there is a constant $c=c(\Omega,p,q)$ such that for all $u,v\in L^{pq}(\Omega)$
		\begin{align*}
		||z(u)|u|^{p-2}u-z(v)|v|^{p-2}v||_q&\leq c(\Omega,p,q)\{(||u||_{pq}^{2p-2}+||u||_{pq}^{p-1}||v||_{pq}^{p-1}+\\
		&||u||_{pq}^{p-2}||v||_{pq}^{p}+||v||_{pq}^{2p-2})||u-v||_{pq}+\\
		&||v||_{pq}^p||u-v||_{pq}^{p-1}\}.
		\end{align*}
	\end{enumerate}
\end{lemma}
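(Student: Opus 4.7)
The plan is to decompose
\[
z(u)|u|^{p-2}u-z(v)|v|^{p-2}v=[z(u)-z(v)]\,|u|^{p-2}u+z(v)\bigl[|u|^{p-2}u-|v|^{p-2}v\bigr]
\]
and bound each summand in $L^q(\Omega)$ using H\"older's inequality, pointwise algebraic bounds for $|\cdot|^p$ and $|\cdot|^{p-2}(\cdot)$, and the mapping property of the truncated Riesz operator $I_2 f(x):=\int_{\Omega}|x-y|^{2-n}f(y)\,dy$. The key analytic observation is that $|x|^{2-n}\in L^{s'}_{\mathrm{loc}}(\mathbb{R}^n)$ iff $s'<n/(n-2)$, equivalently (by duality) $s>n/2$. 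Since $q\geq n-1>n/2$ for $n\geq 3$, H\"older on the bounded domain gives $\|I_2 f\|_\infty\leq C\|f\|_q$, and in particular $\|z(v)\|_\infty\leq C\|v\|_{pq}^p$.

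For the first summand I use $|z(u)-z(v)|\leq I_2(||u|^p-|v|^p|)$. Combined with the pointwise inequality $||u|^p-|v|^p|\leq p(|u|^{p-1}+|v|^{p-1})|u-v|$ (valid for all $p\geq 1$) and one application of H\"older, this yields $\|z(u)-z(v)\|_\infty\leq C(\|u\|_{pq}^{p-1}+\|v\|_{pq}^{p-1})\|u-v\|_{pq}$. Pairing with $\||u|^{p-1}\|_q\leq C\|u\|_{pq}^{p-1}$ (using the boundedness of $\Omega$ since $q(p-1)\leq pq$) produces the $(\|u\|_{pq}^{2p-2}+\|u\|_{pq}^{p-1}\|v\|_{pq}^{p-1})\|u-v\|_{pq}$ contribution that is common to both (i) and (ii).

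For the second summand I use $\|z(v)[|u|^{p-2}u-|v|^{p-2}v]\|_q\leq C\|v\|_{pq}^p\,\||u|^{p-2}u-|v|^{p-2}v\|_q$, and the case distinction reflects the two different pointwise bounds for $||u|^{p-2}u-|v|^{p-2}v|$. When $p\geq 2$ the map $s\mapsto|s|^{p-2}s$ is $C^1$, so $||u|^{p-2}u-|v|^{p-2}v|\leq C(|u|^{p-2}+|v|^{p-2})|u-v|$; H\"older with exponents $pq/(p-2)$ and $pq/2$ gives $(\|u\|_{pq}^{p-2}+\|v\|_{pq}^{p-2})\|u-v\|_{pq}$, whence the $(\|u\|_{pq}^{p-2}\|v\|_{pq}^p+\|v\|_{pq}^{2p-2})\|u-v\|_{pq}$ piece of (ii). When $1<p<2$ the function $s\mapsto|s|^{p-2}s=\operatorname{sgn}(s)|s|^{p-1}$ is only H\"older continuous of order $p-1$, so $||u|^{p-2}u-|v|^{p-2}v|\leq C|u-v|^{p-1}$; combined with $\|u-v\|_{q(p-1)}^{p-1}\leq C\|u-v\|_{pq}^{p-1}$ this delivers the trailing $\|v\|_{pq}^p\|u-v\|_{pq}^{p-1}$ term of (i). The analogous trailing term in (ii) is redundant: for $p\geq 2$ the pointwise bound $|u-v|^{p-1}\leq(|u|+|v|)^{p-2}|u-v|\leq C(|u|^{p-2}+|v|^{p-2})|u-v|$ shows it is already dominated by the linear-in-$\|u-v\|_{pq}$ terms, and I include it only for notational uniformity with (i).

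The main obstacle is exponent bookkeeping: each H\"older exponent must be $\geq 1$, and every intermediate Lebesgue index must be $\leq pq$ so that the embedding $L^{pq}(\Omega)\hookrightarrow L^r(\Omega)$ on the bounded domain returns every factor to $\|\cdot\|_{pq}$. The hypothesis $q\geq n-1$ is tailored precisely for this --- its critical consequence is $q>n/2$, which makes the $L^\infty$-estimate $\|I_2 f\|_\infty\leq C\|f\|_q$ available and thereby decouples the two Lebesgue norms via $\|z(\cdot)\,(\cdot)\|_q\leq\|z(\cdot)\|_\infty\|\cdot\|_q$, sidestepping the more delicate Hardy--Littlewood--Sobolev bookkeeping that would otherwise be required.
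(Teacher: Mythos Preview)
The paper does not supply a proof of this lemma; it is quoted from \cite{LiuB2014} ``for the readers' convenience'' and no argument is given here. Your proof is correct and is the natural one: decompose, bound the truncated Riesz potential uniformly via $\|I_2 f\|_\infty\leq C(\Omega,q)\|f\|_q$ (this is precisely where $q\geq n-1>n/2$ is used, since then $|x|^{2-n}\in L^{q'}(\Omega)$), and close with the standard mean-value/H\"older-continuity pointwise bounds together with H\"older's inequality and the embedding $L^{pq}(\Omega)\hookrightarrow L^r(\Omega)$ for $r\leq pq$. Your remark that the trailing $\|v\|_{pq}^p\|u-v\|_{pq}^{p-1}$ term in (ii) is redundant is also correct: your argument in fact yields the slightly sharper bound containing only the four terms linear in $\|u-v\|_{pq}$, from which the stated inequality follows trivially.
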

The second one is the well-known Lagrange multiplier rule which will help us to find the critical point of $J(u)$. 
\begin{lemma}[Theorem 6.1 in \cite{Quittner2007}]
	\label{pro:L-m}
	Let $X$ be a real Banach space, $w\in X$ and let $\Psi,\Phi_1,\dots,\Phi_k:X\to\mathbb{R}$ be $C^1$ in a neighborhood of $w$. Denote $M:=\{u\in X:\Phi_i(u)=\Phi_i(w)\  \mathrm{for}\ i=1,\dots,k\}$ and assume that $w$ is a local minimizer of $\Psi$ with respect to the set $M$. If $\Phi_1'(w),\dots,\Phi_k'(w)$ are linearly independent, then there exist $\mu_1,\dots,\mu_k\in\mathbb{R}$ such that $$\Psi'(w)=\sum_{i=1}^k\mu_i\Phi_i'(w).$$
\end{lemma}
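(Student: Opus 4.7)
The plan is to obtain a nontrivial critical point of $J$ by constrained minimization on the Nehari manifold $\mathcal{N}$ and then promote it to a positive classical solution via elliptic regularity and the strong maximum principle. Concretely, I would set $d=\inf_{u\in\mathcal{N}}J(u)$ and establish (i) $d>0$, (ii) $d$ is attained at some $u^*\in\mathcal{N}$, (iii) $u^*$ is a weak solution of (\ref{eq:spro}), and (iv) $u^*\in C^2(\bar\Omega)$ with $u^*>0$ in $\Omega$.

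For (i), on $\mathcal{N}$ the identity $I(u)=0$ combined with (\ref{eq:J-I}) gives $J(u)=\bigl(\tfrac12-\tfrac{1}{2p}\bigr)\|\nabla u\|^2$. The Hardy--Littlewood--Sobolev inequality together with the Sobolev embedding $H_0^1\hookrightarrow L^{2n/(n-2)}$ yield $\int_\Omega z(u)|u|^p\,dx\le C\|\nabla u\|^{2p}$ for $p<(n+2)/(n-2)$, so $I(u)=0$ forces $\|\nabla u\|\ge c_0>0$ on $\mathcal{N}$, hence $d>0$. For (ii), take a minimizing sequence $\{u_k\}\subset\mathcal{N}$; the above identity bounds $\|\nabla u_k\|$, so after extracting a subsequence $u_k\rightharpoonup u^*$ in $H_0^1$ and $u_k\to u^*$ strongly in every $L^q$ with $q<2n/(n-2)$. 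The key compactness input is that HLS makes $u\mapsto\int_{\Omega\times\Omega}\frac{|u(x)|^p|u(y)|^p}{|x-y|^{n-2}}\,dxdy$ continuous in the strong $L^{2np/(n+2)}$ topology, so the double-integral term passes to the limit along the sequence. The lower bound $\|\nabla u_k\|\ge c_0$ prevents $u^*\equiv 0$, and weak lower semicontinuity of $\|\nabla\cdot\|$ gives $I(u^*)\le 0$. If $I(u^*)<0$, the standard scaling trick produces $t^*\in(0,1)$ with $t^*u^*\in\mathcal{N}$ and $J(t^*u^*)<\bigl(\tfrac12-\tfrac{1}{2p}\bigr)\|\nabla u^*\|^2\le d$, contradicting the definition of $d$; hence $u^*\in\mathcal{N}$ and $J(u^*)=d$.

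For (iii), a direct differentiation gives $(I'(u^*),u^*)=2\|\nabla u^*\|^2-2p\int z(u^*)|u^*|^p\,dx=-2(p-1)\|\nabla u^*\|^2\neq 0$, so the constraint $I=0$ is nondegenerate at $u^*$ and Lemma \ref{pro:L-m} produces a multiplier $\mu$ with $J'(u^*)=\mu I'(u^*)$. Pairing with $u^*$ and using $(J'(u^*),u^*)=I(u^*)=0$ forces $\mu=0$, so $u^*$ is a weak solution of (\ref{eq:spro}). Since $J(|u|)=J(u)$ and $I(|u|)=I(u)$, we may replace $u^*$ by $|u^*|$ and assume $u^*\ge 0$. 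For (iv), Lemma \ref{lem:estimation-f} (with $v=0$) yields quantitative $L^q$ control of $z(u^*)|u^*|^{p-2}u^*$; combined with Calderón--Zygmund theory this upgrades $u^*$ through a standard bootstrap from $H_0^1$ into $W^{2,q}\hookrightarrow C^{1,\alpha}(\bar\Omega)$, after which the right-hand side of (\ref{eq:spro}) is Hölder continuous and Schauder estimates deliver $u^*\in C^2(\bar\Omega)$. Finally, applying the strong maximum principle to $-\Delta u^*=z(u^*)(u^*)^{p-1}\ge 0$ with $u^*\ge 0$ and $u^*\not\equiv 0$ gives $u^*>0$ in $\Omega$.

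The main obstacle I anticipate is twofold. First, the double-integral structure of the nonlocal nonlinearity is not automatically weakly continuous, so compactness of the minimizing sequence must be extracted by carefully combining Rellich--Kondrachov with HLS to land in a subcritical Lebesgue space where the convolution term is continuous; this is exactly the step that requires $p<(n+2)/(n-2)$. Second, the regularity promotion from weak $H_0^1$ solution to $C^2(\bar\Omega)$ relies on iterating Lemma \ref{lem:estimation-f}, and it is precisely the restriction $p<p_{nl}$ (stronger than the Sobolev--subcritical bound) that keeps the Hardy--Littlewood--Sobolev exponents in the range where the bootstrap closes.
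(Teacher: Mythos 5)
Your proposal does not prove the statement in question. The statement is the abstract Lagrange multiplier rule in a Banach space (Theorem 6.1 in Quittner--Souplet, quoted in the paper without proof): given $C^1$ functionals $\Psi,\Phi_1,\dots,\Phi_k$ on a real Banach space $X$, a point $w$ that locally minimizes $\Psi$ over the level set $M=\{u:\Phi_i(u)=\Phi_i(w)\}$, and linearly independent derivatives $\Phi_1'(w),\dots,\Phi_k'(w)$, one must produce multipliers $\mu_i$ with $\Psi'(w)=\sum_i\mu_i\Phi_i'(w)$. What you have written instead is a sketch of the existence of a positive classical stationary solution of (\ref{eq:spro}) --- essentially Theorem \ref{thm:solu-exist} of the paper, via Nehari-manifold minimization, regularity bootstrap, and the strong maximum principle. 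That is a different theorem, and your argument even invokes Lemma \ref{pro:L-m} as a black box in step (iii), so as a purported proof of Lemma \ref{pro:L-m} it is circular: you cannot use the multiplier rule to derive the multiplier rule.

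A genuine proof of the stated lemma would proceed along the following standard lines: using the linear independence of $\Phi_1'(w),\dots,\Phi_k'(w)$, pick $v_1,\dots,v_k\in X$ with $\Phi_i'(w)v_j=\delta_{ij}$; for any $h\in\bigcap_{i}\ker\Phi_i'(w)$ apply the (finite-dimensional) implicit function theorem to the map $(s,t_1,\dots,t_k)\mapsto\bigl(\Phi_i(w+sh+\sum_j t_jv_j)-\Phi_i(w)\bigr)_{i=1}^k$ to obtain a $C^1$ curve $s\mapsto u(s)\in M$ with $u(0)=w$ and $u'(0)=h$; since $s\mapsto\Psi(u(s))$ has a local minimum at $s=0$, conclude $\Psi'(w)h=0$, i.e.\ $\Psi'(w)$ annihilates $\bigcap_i\ker\Phi_i'(w)$; finally, a linear-algebra argument on the finite-codimensional subspace shows that any functional vanishing on $\bigcap_i\ker\Phi_i'(w)$ lies in the span of the $\Phi_i'(w)$, which yields the $\mu_i$. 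None of this appears in your proposal, so the attempt must be counted as missing the statement entirely, independently of whether your sketch of Theorem \ref{thm:solu-exist} is itself sound.
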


We now present the proof of Theorem \ref{thm:solu-exist}, the idea of which is to find a critical point of energy function $J(u)$. Furthermore, by the elliptic regularity for linear equations \cite{Quittner2007, Gilbarg1998} we show that the variational solution of (\ref{eq:spro}) is also a classical solution which belongs to $C^2(\bar{\Omega})$. 

\begin{proof}[Proof of Theorem \ref{thm:solu-exist}]
	We divide our proof into three steps.
	
	$\mathbf{Step \,1}$. The energy functional $J(u)$ belongs to $C^1(H_0^1(\Omega);\mathbb{R})$. 
	
	For a start, we denote the norm by $||u||_{H_0^1(\Omega)}:=||\nabla u||$ and the inner product in the space $H_0^1(\Omega)$ by $(u,v):=\int_{\Omega}\nabla u\cdot\nabla v\,dx$. And we write the energy functional $J(u)=\Psi(u)-\Phi(u)$, where
	$$\Psi(u):=\frac12||\nabla u||^2\quad \mathrm{and}\quad \Phi(u):=\frac{1}{2p}\int_{\Omega\times \Omega}\frac{|u(y)|^p|u(x)|^p}{|x-y|^{n-2}}dxdy .$$
	
	In view of Lemma \ref{lem:estimation-f}, we deduce that 
	\begin{equation}
	\label{eq:estimation-f-p}
	||z(u)|u|^{p-2}u||_q\leq C(\Omega,p)||u||_{pq}^{2p-1}
	\end{equation}
	for $q\geq n-1\geq 2$. 
	For $1< p<\frac{2n}{(n-1)(n-2)}$, by choosing a $q\geq n-1$ such that $pq\leq2n/(n-2)$ and using Sobolev embedding theorem, we have $z(u)|u|^{p-2}u\in L^q(\Omega)\subset L^2(\Omega)\hookrightarrow H^{-1}(\Omega)$ for $u\in H_0^1(\Omega)$. Therefore, put $f(u):=z(u)|u|^{p-2}u$, we compute 
	\begin{equation}
	\label{eq:derivative-J}
	R[J'(u)]=R[\Psi'(u)]-R[\Phi'(u)]=u-R[f(u)],
	\end{equation}
	where $R:H^{-1}(\Omega)\to H_0^1(\Omega)$ is the Riesz isometric isomorphism. Apparently, $\Psi(u)\in C^1(H_0^1(\Omega);\mathbb{R})$ and we need only to prove $\Phi(u)\in C^1(H_0^1(\Omega);\mathbb{R})$ too. To see this, notice that if $u,v\in H_0^1(\Omega)$, then by Lemma \ref{lem:estimation-f} we obtain
	\begin{align*}
	||\Phi'(u)-\Phi'(v)||_{H^{-1}(\Omega)}
	&=||f(u)-f(v)||_{H^{-1}(\Omega)}\\
	&\leq C(n,p,\Omega)||f(u)-f(v)||_q\\
	&\leq C(n,p,\Omega)\left(||u-v||_{pq}+||u-v||_{pq}^{p-1}\right).
	\end{align*}
	Thus, since $pq\leq2n/(n-2)$, the mapping $\Phi'(u):H_0^1(\Omega)\to \mathbb{R}$ is continuous. Consequently, $\Phi(u)\in C^1(H_0^1(\Omega);\mathbb{R})$, and we find $J(u)\in C^1(H_0^1(\Omega);\mathbb{R})$.
	
	%2
	$\mathbf{Step \,2}$. We shall show there is a critical point of $J(u)$ which is a nonnegative variational solution of (\ref{eq:spro}).
	
	Let $u_k\in M:=\{u\in H_0^1(\Omega):\Phi(u)=1\}$, we claim the set $M$ is weakly sequentially closed in $H_0^1(\Omega)$. To see this, let $u_k\rightharpoonup u$ in $H_0^1(\Omega)$, we need to show $\Phi(u_k)\to\Phi(u)$ and we estimate
	$$|\Phi(u_k)-\Phi(u)|=\left|\int_{\Omega}z(u_k)|u_k|^p\,dx-\int_{\Omega}z(u)|u|^p\,dx\right|\leq I_1+I_2$$
	where $$I_1:=\left|\int_{\Omega}z(u_k)(|u_k|^p-|u|^p)\,dx\right|,\quad I_2:=\left|\int_{\Omega}(z(u_k)-z(u))|u|^p\,dx\right|.$$
	Using the H\"older inequality, for $q\geq n-1,1<q'\leq (n-1)/(n-2)$ such that $pq<2n/(n-2)$ and $1/q+1/q'=1$, we have 
	$$z(u_k)=\int_{\Omega}\frac{|u_k(y)|^p}{|x-y|^{n-2}}\,dy\leq ||u_k||_{pq}^p\left(\int_{\Omega}\frac{1}{|x-y|^{(n-2)q'}}\,dy\right)^{1/q'}\leq C_{\Omega}||u_k||_{pq}^p.$$
	Since $\{u_k\}$ is bounded in $H_0^1(\Omega)$ we obtain $z(u_k)$ is also bounded. And, we get $u_k\to u$ in $L^{pq}(\Omega)$ due to $H_0^1(\Omega)\hookrightarrow L^{pq}(\Omega)$. This implies $I_1\to 0$ as $k\to\infty$. Then in order to prove $I_2\to 0$, we estimate similarly
	\begin{align*}
	|z(u_k)-z(u)|&=\left|\int_{\Omega}\frac{|u_k(y)|^p-|u(y)|^p}{|x-y|^{n-2}}\,dy\right|\\
	&\leq C_{\Omega,p,q}\left(\int_{\Omega}||u_k|^p-|u|^p|^q\,dy\right)^{1/q}\\
	&\leq C_{\Omega,p,q}\left(\int_{\Omega}\xi^{(p-1)q}||u_k|-|u||^q\,dy\right)^{1/q},
	\end{align*}
	here $\xi(y)$ is between $|u_k(y)|$ and $|u(y)|$. Hence
	\begin{align*}
	|z(u_k)-z(u)|&\leq C_{\Omega,p,q}\left(\int_{\Omega}\xi^{(p-1)q\frac{p}{p-1}}\,dy\right)^{\frac{p-1}{pq}}\left(\int_{\Omega}||u_k|-|u||^{pq}\,dy\right)^{\frac{1}{pq}}\\
	&\leq  C_{\Omega,p,q}(||u_k||_{pq}^{p-1}+||u||_{pq}^{p-1})||u_k-u||_{pq}.
	\end{align*}
	Above estimate means $z(u_k)\to z(u)$ as $k\to\infty$, and so $I_2\to 0$ as $k\to\infty$. Consequently, $\Phi(u_k)\to\Phi(u)$ and hence $u\in M$, this proves our claim.
	
	Since $$\Psi''(u)[h,h]=2\Psi(h)=||\nabla h||^2\geq 0,$$
	the functional $\Psi$ is convex and coercive, it follows from the result of existence of minimizer that there exists $w\in M$ such that $\Psi(w)=\inf_M\Psi$. Moreover, notice that $|w|\in M$ due to $\Psi(|w|)=\Psi(w)$, we may assume that $w\geq 0$. Since $$\Phi'(w)w=2p\Phi(w)=2p,$$ 
	it implies $\Phi'(w)\neq 0$. According to Lemma \ref{pro:L-m}, there exists $\mu$ such that $\Psi'(w)=\mu \Phi'(w)$, hence $$0<2\Psi(w)=\Psi'(w)w=\mu\Phi'(w)w=\mu2p,$$
	which means $\mu>0$. Setting now $t:=\mu^{1/(2p-2)}$, we get a critical point:
	$$J'(tw)=\Psi'(tw)-\Phi'(tw)=t(\Psi'(w)-t^{2p-2}\Phi'(w))=0.$$
	Therefore, $u:=tw=\mu^{1/(2p-2)}w\not\equiv0$ is a nonnegative variational solution of elliptic problem (\ref{eq:spro}).

	%Denote $f(u):=\left(\frac{1}{|x|^{n-2}}*|u|^p\right)|u|^{p-2}u$. Applying the Hardy-Littlewood-Sobolev inequality, we have $$|f(u)|=\int_{\Omega\times \Omega}\frac{|u(y)|^p|u(x)|^{p-1}}{|x-y|^{n-2}}dxdy\leq C_{n,p}||u||_{n(2p-1)/(n+2)}^{2p-1}.$$
	%Since $n(2p-1)/(n+2)<2n/(n-2)$ due to $1<p<\frac{n+2}{n-2}$, by using H\"older inequality and Sobolev inequality, we obtain 
	%$$||u||_{n(2p-1)/(n+2)}\leq C_{\Omega}||u||_{2n/(n-2)}\leq C||\nabla u||.
	%$$
	
	%3
	$\mathbf{Step \,3}$. We now show that the variational solution $u$ obtained above satisfies $u\in C^2(\bar{\Omega})$.
	
	In fact, this is a consequence of standard regularity results for linear elliptic equations. To see this, by a simple bootstrap argument, we first claim $\tilde{f}(x):=z(u(x))|u(x)|^{p-2}u(x)\in L^q(\Omega)$ for any $2\leq q<\infty$ as follows. 
	
	Notice $p<\frac{2n}{(n-1)(n-2)}\leq \frac{n}{n-2}$ for $n\geq 3$, and fix $\rho\in(1,n/(n-2)p)$. We assume that there holds
	\begin{equation}
	\label{eq:boot-f}
	\tilde{f}\in L^{\rho^i}(\Omega)
	\end{equation}
	for some $i\geq0$ (this is true for $i=0$ by fact that $\tilde{f}\in L^2(\Omega)\subset L^1(\Omega)$ from (\ref{eq:estimation-f-p})). Recall that the variationl solution $u$ is also an $L^1$-solution, and since
	$$
	\frac{1}{\rho^i}-\frac{1}{p\rho^{i+1}}=\frac{1}{\rho^i}\left(1-\frac{1}{p\rho}\right)<\frac{2}{n},
	$$
	by using the Laplacian's regularity for $L^1$-solution (see for example Proposition 47.5(i) in \cite{Quittner2007}), we obtain $u\in L^{p\rho^{i+1}}(\Omega)$, hence $\tilde{f}\in L^{\rho^{i+1}}(\Omega)$ follows from (\ref{eq:estimation-f-p}). Thus, by induction, it follws that (\ref{eq:boot-f}) is true for all integers $i$ and proves our claim.
	
	Then, we may apply the elliptic regularity (see Theorem 47.3(i) in \cite{Quittner2007}) to deduce the existence of $\tilde{u}\in W_0^{1,q}(\Omega)$ such that $-\Delta\tilde{u}=\tilde{f}$. Since $u,\tilde{u}\in H_0^1(\Omega)$, the maximum principle for variational solution (see Proposition 52.3(i) in \cite{Quittner2007}) yields that $u=\tilde{u}$. Hence, due to the embedding $W^{1,q}(\Omega)\hookrightarrow C^{\alpha}(\bar{\Omega})$ for $q>n$, we deduce that $u\in C^{\alpha}(\bar{\Omega})$, where $\alpha=1+[n/q]-n/q$. 
	
	Now, in order to prove $u\in C^2(\bar{\Omega})$ by using elliptic regularity (see Theorem 6.14 in \cite{Gilbarg1998}), we prove $\tilde{f}\in C^{\alpha}(\bar{\Omega})$ in the following way.
	
	Let $|\cdot|_{\alpha;\bar{\Omega}}:=||\cdot||_{C(\bar{\Omega})}+[\cdot]_{\alpha;\bar{\Omega}}$ be the norm in $C^{\alpha}(\bar{\Omega})$. Then  
	\begin{equation}
	\label{eq:holder-f}
	|\tilde{f}|_{\alpha;\bar{\Omega}}=|z(u)|u|^{p-2}u|_{\alpha;\bar{\Omega}}\leq |z(u)|_{\alpha;\bar{\Omega}}||u|^{p-2}u|_{\alpha;\bar{\Omega}}.
	\end{equation}
	Since $u\in C^{\alpha}(\bar{\Omega})$, we have 
	\begin{align*}
	[|u|^p]_{\alpha;\bar{\Omega}}=\sup_{x\neq y}\frac{||u(x)|^p-|u(y)|^p|}{|x-y|^{\alpha}}&=\sup_{x\neq y}p\xi^{p-1}\frac{||u(x)|-|u(y)||}{|x-y|^{\alpha}}\\
	&\leq C(\Omega,p)[u]_{\alpha;\bar{\Omega}}\leq C,
	\end{align*}
	where $\xi$ is between $|u(x)|$ and $|u(y)|$. Using this, we derive
	\begin{align*}
	[z(u)]_{\alpha;\bar{\Omega}}&=\sup_{x\neq y}\frac{|z(u)(x)-z(u)(y)|}{|x-y|^\alpha}\\
	&=\sup_{x\neq y}\left|\int_{\Omega}\frac{|u(x-z)|^p-|u(y-z)|^p}{(|x-y|^{\alpha})|z|^{n-2}}\,dz\right|\\
	&\leq [|u|^p]_{\alpha;\bar{\Omega}}\int_{\Omega}\frac{1}{|z|^{n-2}}\,dz\\
	&=C(\Omega) [|u|^p]_{\alpha;\bar{\Omega}}\leq C.
	\end{align*}
	Thus we get 
	\begin{equation}
	\label{eq:holder-z}
	|z(u)|_{\alpha;\bar{\Omega}}=||z(u)||_{C(\bar{\Omega})}+[z(u)]_{\alpha;\bar{\Omega}}\leq C.
	\end{equation}
	Similarly,  we have 
	\begin{equation}
	\label{eq:holder-u}
	||u|^{p-2}u|_{\alpha;\bar{\Omega}}=|||u|^{p-2}u||_{C(\bar{\Omega})}+[|u|^{p-2}u]_{\alpha;\bar{\Omega}}\leq C(\Omega)+C(\Omega,p)[u]_{\alpha;\bar{\Omega}}\leq C.
	\end{equation}
	Combining (\ref{eq:holder-f}),  (\ref{eq:holder-z}) and (\ref{eq:holder-u}), we get $\tilde{f}\in C^{\alpha}(\bar{\Omega})$. 
	
	Therefore, applying Theorem 6.14 in \cite{Gilbarg1998}, we deduce that $u\in C^2(\bar{\Omega})$. Furthermore, the strong maximum principle shows $u>0$ in $\Omega$.
	
	This completes the proof.
\end{proof}

\section{Boundness and asymptotic property of global solutions}\label{sec:4}

This section is devoted to the global solution of problem (\ref{eq:pro}). We find that the energy functional of global solution is nonnegative in the whole time. Meanwhile, we give a $L^2$-norm bound determined by the initial energy for global solution. Moreover, we prove that each global solution of (\ref{eq:pro}) will convergent to an equilibrium. 

\begin{proposition}
	\label{pro:bound-J}
	Let $\Omega$ be a smooth bounded convex domain and $1<p<\frac{n+2}{n-2} \  (n\geq3)$. Assume $u_0\in C(\bar\Omega)\cap H_0^1(\Omega)$ and $T_{\max}(u_0)=\infty$. Then the corresponding solution $u(t)$ of (\ref{eq:pro}) fulfills $0\leq J(u(t))\leq J(u_0)$ and 
	\begin{equation}
	\label{eq:estimation-u}
	||u(t)||^2\leq \frac{2p}{(p-1)\lambda}J(u_0)
	\end{equation}
	for all $t>0$, where $\lambda$ is the first eigenvalue of $-\Delta$ in $H_0^1(\Omega)$.
\end{proposition}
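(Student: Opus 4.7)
The plan is to establish the three assertions in sequence, with the first direct and the other two by contradiction. The upper bound $J(u(t)) \leq J(u_0)$ is immediate from integrating the energy identity $\frac{d}{dt}J(u(t)) = -\|u_t\|^2 \leq 0$ recalled in the introduction; this yields the dissipation identity $\int_0^t \|u_s\|^2\,ds = J(u_0) - J(u(t))$, which combined with the nonnegativity below produces $\int_0^t \|u_s\|^2\,ds \leq J(u_0)$ and will drive the $L^2$ bound.

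For the nonnegativity $J(u(t)) \geq 0$ I would argue by contradiction. Suppose $J(u(t_0)) < 0$ at some $t_0 \geq 0$; then $u(t_0) \not\equiv 0$, and rewriting (\ref{eq:J-I}) as $I(u) = 2pJ(u) - (p-1)\|\nabla u\|^2$ forces $I(u(t_0)) < 0$. Since also $J(u(t_0)) < 0 < d$, translating time so that $u(t_0)$ becomes a new initial datum (which still lies in $C(\bar\Omega) \cap H_0^1(\Omega)$ by parabolic smoothing) places us in the low-energy, negative-Nehari blow-up regime of \cite{LiuB2014, LiLiu2016}. That result forces finite-time blow-up after $t_0$, contradicting $T_{\max}(u_0) = \infty$; in particular one also obtains $J(u_0) \geq 0$.

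For the $L^2$ bound I would argue by contradiction once more. Suppose $\|u(t_0)\|^2 > \kappa := \frac{2pJ(u_0)}{(p-1)\lambda}$ at some $t_0 \geq 0$ and set $H(t) := \|u(t)\|^2$. Differentiating and using (\ref{eq:J-I}), Poincar\'e's inequality $\|\nabla u\|^2 \geq \lambda \|u\|^2$, and the monotonicity of $J$ yield
\begin{equation*}
H'(t) = -2I(u(t)) = 2(p-1)\|\nabla u\|^2 - 4pJ(u(t)) \geq 2(p-1)\lambda\, H(t) - 4pJ(u_0),
\end{equation*}
so that $y(t) := H(t) - \kappa$ satisfies $y' \geq 2(p-1)\lambda\, y$ with $y(t_0) > 0$; Gronwall then delivers exponential growth $H(t) \geq \kappa + (H(t_0) - \kappa)\,e^{2(p-1)\lambda(t-t_0)}$. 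Against this, the dissipation estimate from the first paragraph, the triangle inequality, and Cauchy--Schwarz give
\[
\|u(t)\| \leq \|u_0\| + \int_0^t \|u_s\|\,ds \leq \|u_0\| + \sqrt{t\, J(u_0)},
\]
an at most $O(\sqrt{t})$ upper bound. As $t \to \infty$ the exponential lower bound overwhelms the polynomial upper bound, producing the contradiction. The main difficulty is engineering precisely this clash between a Gronwall-type exponential lower bound (driven by $I < 0$ via Poincar\'e and the monotonicity of $J$) and a polynomial upper bound (driven by the finite total $L^2$-dissipation of $u_t$); the convexity of $\Omega$ is a standing hypothesis of the paper but does not appear essential to the Poincar\'e step used here.
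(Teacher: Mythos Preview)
Your proof of the first two assertions matches the paper's exactly. For the $L^2$ bound your argument is correct but genuinely different from the paper's. You obtain the same differential inequality $H'(t)\geq 2(p-1)\lambda H(t)-4pJ(u_0)$, deduce exponential growth of $\|u(t)\|$ once the threshold is crossed, and then kill this with the polynomial upper bound $\|u(t)\|\leq \|u_0\|+\sqrt{t\,J(u_0)}$ coming from the finite total dissipation $\int_0^\infty\|u_s\|^2\,ds\leq J(u_0)$ established in Step~2. The paper instead runs a Levine-type concavity argument on $M(t)=\tfrac12\int_0^t\|u(\tau)\|^2\,d\tau$: from the same starting inequality it shows $M',M\to\infty$, then combines the energy identity with Schwarz to get $MM''-p(M')^2>0$ for large $t$, forcing $M^{1-p}$ to be concave and decreasing and hence to vanish at a finite time, contradicting $T_{\max}=\infty$. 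Your route is shorter and more elementary because it exploits the already-proved nonnegativity $J(u(t))\geq 0$ a second time to bound $\|u(t)\|$ from above; the paper's concavity method is the classical blow-up machinery and does not re-use that bound, but it is heavier for this particular conclusion.
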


\begin{proof}
	For any $t>0$, it's known that $\frac{d}{dt}J(u(t))=-\int_{\Omega}u_t^2\,dx$ and $J(u(t))\leq J(u_0)$. To prove $J(u(t))\geq 0$, we assume for contradiction that there exits a $t'>0$ such that $J(u(t'))<0$. Since $$I(u(t'))=2pJ(u(t'))-(p-1)||\nabla u(t')||^2\leq2pJ(u(t'))<0$$ and $J(u(t'))<0<d$, recall the result of blow-up solutions in the low energy case (see Section \ref{intro}), we deduce $T_{\max}(u(t'))<\infty$ immediately, which contradicts $T_{\max}(u_0)=\infty$. Therefore, we get $0\leq J(u(t))\leq J(u_0)$ for all $t>0$.
	
	Then, to see (\ref{eq:estimation-u}), we denote $M(t)=\frac12\int_0^t||u(\tau)||^2d\tau$, and $M'(t)=\frac12||u(t)||^2$. Using the Poincar\'e inequality we have
	\begin{align}
	M''(t)&=\int_{\Omega}uu_t\,dx=-I(u(t))\notag\\
	&=-2pJ(u(t))+(p-1)\int_{\Omega}|\nabla u(t)|^2\,dx\notag\\
	&\geq -2pJ(u_0)+(p-1)\lambda\int_{\Omega}u(t)^2\,dx.\label{eq:L2norm-u}
	\end{align}
	By estimate (\ref{eq:L2norm-u}), we claim that $$(p-1)\lambda||u(t)||^2\leq2pJ(u_0),\quad t>0.$$
	If otherwise, $\exists\, t''>0$ s.t. $(p-1)\lambda||u(t'')||^2>2pJ(u_0)$. Using (\ref{eq:L2norm-u}), we get 
	$$M''(t'')\geq -2pJ(u_0)+2(p-1)\lambda M'(t'')>0.$$
	Thus, let $C_0:=-2pJ(u_0)+2(p-1)\lambda M'(t'')$, we see
	$$ M''(t)\geq C_0>0, \quad\forall t\geq t'',$$
	which indicates $M'(t)\to \infty$ and $M(t)\to\infty$ as $t\to\infty$. Now  for contradiction, we estimate
	\begin{equation}
	\label{eq:M'-M''}
	M''(t)\geq 2p\int_0^t||u_t||^2d\tau+2(p-1)\lambda M'-2pJ(u_0),\quad t>0.
	\end{equation}
	Integrating $M''(t)=\int_{\Omega}uu_tdx$ on $(0,t)$ yields
	$$M'(t)-M'(0)=\int_0^t\int_{\Omega}uu_t\,dxd\tau,$$
	then
	\begin{align}
	(M'(t))^2&=-(M'(0))^2+2M'(t)M'(0)+\left(\int_0^t\int_{\Omega}uu_tdxd\tau\right)^2 \notag \\
	&=-\frac14||u_0||^2+M'(t)||u_0||^2+\left(\int_0^t\int_{\Omega}uu_tdxd\tau\right)^2\notag \\
	&\leq M'(t)||u_0||^2+\left(\int_0^t\int_{\Omega}uu_tdxd\tau\right)^2 \label{eq:esti-M'}.
	\end{align}
	Hence, combining (\ref{eq:M'-M''}) and (\ref{eq:esti-M'}), we have
	\begin{align}
	MM''-pM'^2 &\geq p\left[ \int_0^t||u||^2d\tau \cdot \int_0^t||u_t||^2d\tau -\left(\int_0^t\int_{\Omega}uu_tdxd\tau\right)^2\right] \notag \\
	&+(p-1)\lambda MM'-pMJ(u_0)-pM'||u_0||^2 \notag \\
	&\geq (p-1)\lambda MM'-pMJ(u_0)-pM'||u_0||^2 \label{eq:M-M'-M''},
	\end{align}
	where we have used Schwatz's inequality. Due to $M'(t)\to \infty$ and $M(t)\to\infty$ as $t\to\infty$, we may choose a $t_0>t''$ such that
	$$ \frac{p-1}{2}\lambda M(t)>p||u_0||^2 ,\ \frac{p-1}{2}\lambda M'(t)>pJ(u_0) , \, t>t_0.$$
	Thus, from (\ref{eq:M-M'-M''}) we get
	$$ M(t)M''(t)-pM'(t)^2>0, \, t>t_0.$$
	This inequality guarantees that the function $M^{1-p}(t)$ is nonincreasing and concave on $[t_0,\infty)$. Therefore, there exists a finite time $T>t_0$ such that $\lim_{t\to T}M^{1-p}(t)=0$ i.e. $\lim_{t\to T}M(t)=+\infty$ which contradicts the fact $T_{\max}(u_0)=\infty$.
	
	Consequently, (\ref{eq:estimation-u}) holds by our claim, and we complete the proof.
\end{proof}

The following result is a direct consequence of Proposition \ref{pro:bound-J} and Hardy-Littlewood-Sobolev inequality.
\begin{corollary}
	\label{cor:bound}
	Let $\Omega$ be a smooth bounded convex domain and $1<p\leq\frac{n+2}{n} \  (n\geq3)$. Assume $u_0\in C(\bar\Omega)\cap H_0^1(\Omega)$ and $T_{\max}(u_0)=\infty$. Then corresponding solution $u(t)$ of (\ref{eq:pro}) is bounded in $H_0^1(\Omega)$ for all $t>0$.
\end{corollary}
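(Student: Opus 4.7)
The plan is to deduce the $H_0^1$ bound directly from Proposition \ref{pro:bound-J} by controlling the nonlocal term in the energy functional via the Hardy--Littlewood--Sobolev inequality, using the extra hypothesis $p\le (n+2)/n$ in a crucial way.

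First I would rewrite the definition of $J$ as
$$\|\nabla u(t)\|^2 = 2J(u(t)) + \frac{1}{p}\int_{\Omega\times\Omega}\frac{|u(y)|^p|u(x)|^p}{|x-y|^{n-2}}\,dx\,dy,$$
so that the uniform bound $0\le J(u(t))\le J(u_0)$ supplied by Proposition \ref{pro:bound-J} reduces the problem to bounding the double integral uniformly in $t$.

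Second, I would apply the classical Hardy--Littlewood--Sobolev inequality to the kernel $|x-y|^{-(n-2)}$ with both exponents equal, which forces $\tfrac{1}{r}+\tfrac{1}{r}+\tfrac{n-2}{n}=2$ and hence $r=\tfrac{2n}{n+2}$. Taking $f=g=|u|^p$ (extended by zero outside $\Omega$) yields
$$\int_{\Omega\times\Omega}\frac{|u(y)|^p|u(x)|^p}{|x-y|^{n-2}}\,dx\,dy \;\le\; C\,\bigl\||u|^p\bigr\|_{2n/(n+2)}^2 \;=\; C\,\|u\|_{2pn/(n+2)}^{2p}.$$
The hypothesis $p\le \tfrac{n+2}{n}$ is exactly the statement $\tfrac{2pn}{n+2}\le 2$, so since $\Omega$ is bounded H\"older's inequality gives $\|u(t)\|_{2pn/(n+2)}\le C_\Omega \|u(t)\|_2$. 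Combining this with the $L^2$ bound $\|u(t)\|^2\le \tfrac{2p}{(p-1)\lambda}J(u_0)$ from Proposition \ref{pro:bound-J} yields a uniform bound on the nonlocal term.

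Third, plugging everything back gives
$$\|\nabla u(t)\|^2 \;\le\; 2J(u_0) + \frac{C}{p}\left(\frac{2p}{(p-1)\lambda}J(u_0)\right)^p \quad\text{for all } t>0,$$
which is the required $H_0^1$ bound. The argument is essentially bookkeeping; the main content is the recognition that $p\le (n+2)/n$ is precisely the subcritical threshold at which HLS control of the nonlocal term can be absorbed by the Poincar\'e-type $L^2$ control already furnished by Proposition \ref{pro:bound-J}, so no genuine obstacle arises.
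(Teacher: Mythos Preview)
Your proof is correct and follows essentially the same route as the paper: rewrite the energy identity to isolate $\|\nabla u(t)\|^2$, bound the nonlocal term by Hardy--Littlewood--Sobolev to obtain $C\|u(t)\|_{2np/(n+2)}^{2p}$, use $p\le (n+2)/n$ together with H\"older to reduce this to an $L^2$ norm, and then invoke the uniform $L^2$ bound (\ref{eq:estimation-u}) from Proposition \ref{pro:bound-J}. The only difference is cosmetic---you make the final constant explicit, whereas the paper simply writes $\|\nabla u(t)\|\le C$.
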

\begin{proof}
	By using the Hardy-Littlewood-Sobolev inequality we have
	$$\frac12||\nabla u(t)||^2=J(u(t))+\frac{1}{2p}\int_{\Omega}z(u)|u|^p\,dx\leq C+\frac{1}{2p}C_{n,p}||u(t)||_{\frac{2np}{n+2}}^{2p}.$$
	Since $1<p\leq(n+2)/n$, $2np/(n+2)\leq2$ and so $||u(t)||_{2np/(n+2)}\leq C_{n,p,\Omega}||u(t)||$ by H\"older inequality. In view of (\ref{eq:estimation-u}), we immediately deduce that $$||\nabla u(t)||\leq C, \quad t\geq 0,$$ 
	where $C$ is a constant which depends on $J(u_0)$. Hence, any global solution of (\ref{eq:pro}) is bounded in $H_0^1(\Omega)$.
\end{proof}

\begin{remark}
	From the proof of Corollary \ref{cor:bound}, provided $1<p<\frac{n+2}{n}$, we actually can further look for an a priori estiamte of global solutions for problem (\ref{eq:pro}). Indeed, given $K>0$, if $||\nabla u_0||\leq K$, then $J(u_0)\leq \frac12 K^2$. This implies there exists constant $C(K)$ such that all global solutions satisfy $||\nabla u(t)||\leq C(K)$ for all $t\geq 0$.
\end{remark}

Next we study the dynamical behavior of system generated by (\ref{eq:pro}). Let $u(t)$ be a local solution to problem (\ref{eq:pro}) on $[0,T_{\max}(u_0))$. We denote by $S(t): u_0\mapsto u(t)$ be a dynamical system corresponding to (\ref{eq:pro}). Thus, instead of $u(t)$ we will also write $S(t)u_0$ for $t<T_{\max}(u_0)$. For global solution $u(t)$ with initial datum $u_0$, define the $\omega$-limit set $\omega(u_0)$ by
$$\omega(u_0):=\{\varphi\in H_0^1(\Omega)|\,\exists\,t_n\to\infty\ \mathrm{s.t.}\ u(t_n)\to\varphi\ \mathrm{as}\ n\to\infty\}.$$
Using the above definition, we have
\begin{theorem}
	\label{thm:equilibria}
	Let $\Omega$ be a smooth bounded convex domain and $1<p<p_{nl}$. Assume $u_0\in C(\bar\Omega)\cap H_0^1(\Omega)$ and $T_{\max}(u_0)=\infty$. Then:
	\begin{enumerate}
		%\item[(i)] The $S(t)u_0$ is bounded in $H_0^1(\Omega)$ for all $t>0$. 
		\item[(i)] The trajectory $\{S(t)u_0:t\geq 0\}$ is relatively compact in $H_0^1(\Omega)$.
		\item[(ii)] The $\omega$-limit set $\omega(u_0)$ is a nonempty comapct and connected subset of $H_0^1(\Omega)$ which consists of solutions of (\ref{eq:spro}).
	\end{enumerate}
\end{theorem}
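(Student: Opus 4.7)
The plan is to first establish precompactness of the forward orbit in $H_0^1(\Omega)$ by a parabolic bootstrap, and then exploit the Lyapunov structure of $J$ to describe $\omega(u_0)$ via a LaSalle-type argument.

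For part (i), the starting point is the uniform $L^2$-bound of Proposition \ref{pro:bound-J}. The first task is to upgrade this to a uniform bound in a Sobolev space compactly embedded in $H_0^1(\Omega)$. I would use the Duhamel representation
$$u(t) = e^{\Delta}u(t-1) + \int_{t-1}^{t} e^{(t-\tau)\Delta}\bigl(z(u)|u|^{p-2}u\bigr)(\tau)\,d\tau, \qquad t \geq 1,$$
together with Lemma \ref{lem:estimation-f} applied with some $q \geq n-1$ satisfying $pq \leq 2n/(n-2)$ (possible since $p < p_{nl}$), so that the nonlinearity is controlled in $L^q$ by powers of $\|u\|_{H_0^1}$. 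A standard iteration, alternating smoothing estimates for the heat semigroup with Sobolev embedding, then promotes the $L^2$-bound first to a uniform $H_0^1$-bound and eventually to a uniform bound in $W^{2-\varepsilon,q}(\Omega) \hookrightarrow\hookrightarrow H_0^1(\Omega)$ for all $t\geq 1$. The piece $\{S(t)u_0 : 0 \leq t \leq 1\}$ is precompact as the continuous image of a compact interval, which completes (i).

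For part (ii), nonemptiness, closedness and compactness of $\omega(u_0)$ follow from (i) by the usual dynamical-systems reasoning, and connectedness is inherited from the representation $\omega(u_0) = \bigcap_{t \geq 0}\overline{\{S(\tau)u_0 : \tau \geq t\}}$ as a decreasing intersection of compact connected sets. The substantive content is that every $\phi \in \omega(u_0)$ is a stationary solution. Since $J(u(t))$ decreases monotonically to some $J_\infty \geq 0$, the dissipation identity
$$J(u_0) - J(u(t)) = \int_0^t \|u_\tau\|^2\,d\tau$$
forces $\int_0^\infty \|u_\tau\|^2\,d\tau < \infty$, so $\int_{t_n}^{t_n+s}\|u_\tau\|^2\,d\tau \to 0$ along any $t_n \to \infty$ and for every fixed $s > 0$. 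Choosing $t_n \to \infty$ with $u(t_n) \to \phi$ in $H_0^1$, continuous dependence on initial data yields $u(t_n + \cdot) \to S(\cdot)\phi$ in $H_0^1$ uniformly on $[0,s]$, and passing to the limit in the dissipation identity gives $J(S(s)\phi) = J(\phi)$ for all $s \geq 0$. Differentiating in $s$ forces $\partial_s S(s)\phi \equiv 0$, so $\phi = S(s)\phi$ is a fixed point of the flow and therefore, by Theorem \ref{thm:solu-exist}-type regularity, a classical solution of (\ref{eq:spro}).

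The main obstacle is the parabolic bootstrap in step (i): Corollary \ref{cor:bound} only supplies a uniform $H_0^1$-bound when $1 < p \leq (n+2)/n$, so one must extend this to the full subcritical range $1 < p < p_{nl}$ and then further upgrade it to a bound in a space compactly embedded in $H_0^1(\Omega)$. The delicate part is to match the Lebesgue exponents permitted by Lemma \ref{lem:estimation-f} against the smoothing rates of the heat semigroup so that each iteration genuinely gains regularity and terminates in finitely many steps.
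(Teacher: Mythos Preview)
Your treatment of part (ii) via the LaSalle-type argument (dissipation identity forces $S(s)\varphi = \varphi$) matches the paper's. The real divergence is in part (i).

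You pursue a parabolic-regularity route: start from the uniform $L^2$-bound of Proposition~\ref{pro:bound-J}, then bootstrap via Duhamel and heat-semigroup smoothing to a uniform bound in a space compactly embedded in $H_0^1(\Omega)$. You rightly flag the obstacle: for $(n+2)/n < p < p_{nl}$ Corollary~\ref{cor:bound} does not apply, and controlling $\|f(u)\|_q$ through Lemma~\ref{lem:estimation-f} requires $\|u\|_{pq}$ with $q\geq n-1$, hence $pq\geq p(n-1)>2$, so the $L^2$-bound alone cannot start the iteration. Closing this loop is plausible but is genuine work that you have not carried out.

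The paper sidesteps the bootstrap entirely with a Palais--Smale type argument that exploits the gradient-flow structure. Given any $t_n\to\infty$, the integrability $\int_0^\infty\|u_t\|^2\,dt<\infty$ permits passage to a subsequence along which $\|u_t(t_n)\|\to 0$, hence $J'(u(t_n))\to 0$ in $H^{-1}(\Omega)$. Combining this with $J(u(t_n))\leq J(u_0)$ and the algebraic relation between $J$ and $(J'(u),u)$ forces $\|\nabla u(t_n)\|\leq C$ \emph{along this subsequence only}. Writing $u(t_n)=R[f(u(t_n))]+R[J'(u(t_n))]$ with $R:H^{-1}\to H_0^1$ the Riesz isomorphism, and observing that $u\mapsto R[f(u)]$ is compact on bounded subsets of $H_0^1$ (since $f:H_0^1\to L^2$ is continuous for $p<p_{nl}$ and $L^2\hookrightarrow H^{-1}$ is compact), one extracts a further convergent subsequence. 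No uniform $H_0^1$-bound on the full trajectory is ever established or needed.

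Your route, if completed, would yield more (uniform higher-order bounds); the paper's route is shorter and purely variational, obtaining relative compactness directly from the Lyapunov structure rather than from parabolic smoothing.
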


Above result is a technical consequence in the study of partial differential equations, and there are similar results for more general parabolic equations,  see \cite{Chill2006, Carm1999, Fuji1969, Quittner1994, Quittner2003} and the references therein. 
%We are now in a position to prove Theorem \ref{thm:equilibria}.
\begin{proof}[Proof of Theorem \ref{thm:equilibria}]
	Throughout the proof, we denote $f(u):=z(u)|u|^{p-2}u$ and $u(t):=S(t)u_0$. Also, notice that $u(t)\in C^1((0,\infty), L^2(\Omega))\cap  C([0,\infty), H_0^1(\Omega))$ (see \cite{LiuB2014}).
	
	(i). For arbitrary sequence $\{S(t_n)u_0\}\in\{S(t)u_0:t\geq 0\}, n\in\mathbb{N}$, if series $\{t_n\}$ is bounded, then it's clear that there exists a $t'<\infty$ such that $t_n\to t', S(t_n)u_0\to S(t')u_0\in H_0^1(\Omega)$. In the following, given any unbounded time series  $\{t_n\}$, let us show the set $\{S(t_n)u_0\}$ has a convergent subsequence in $H_0^1(\Omega)$.
	
	 For global solution $u(t)$, we have $0\leq J(u(t))\leq J(u_0)$ from the proof of Proposition \ref{pro:bound-J}. Hence, 
	$$\int_0^{\infty}\int_{\Omega}u_t^2\,dxdt=J(u_0)-\lim_{t\to\infty}J(u(t))\leq J(u_0).$$ 
	By this, we deduce that there is a subsequence(we denote still by $\{t_n\}$ for convenience) of $\{t_n\}$ such that $t_n\to\infty$ and $\int_{\Omega} u_t^2(t_n)\,dx\to 0$ as $n\to\infty$, hence $\lim_{t_n\to\infty}u_t(t_n)=0$ a.e. in $\Omega$. Then along the flow (\ref{eq:pro}), we compute 
	$$(J'(u(t)),v)=-\int_{\Omega} u_tvdx,\quad t>0$$
	for any $v\in H_0^1(\Omega)$. Therefore, the Cauchy-Schwarz inequality gives $$J'(u(t_n))\to 0,\quad n\to\infty.$$ That is, for each $\epsilon>0$ and $v\in H_0^1(\Omega)$ we have 
	$$
	|(J'(u(t_n)),v)|=\left|\int_{\Omega}\nabla u(t_n)\cdot\nabla v-f(u(t_n))v\,dx\right|\leq \epsilon||\nabla v||
	$$
	for $n$ sufficiently large. Let $v=u(t_n)$ above to find
	$$
	\left|\int_{\Omega}|\nabla u(t_n)|^2-f(u(t_n))u(t_n)\,dx\right|\leq \epsilon||\nabla u(t_n)||.
	$$
	Take $\epsilon=1$ above in particular, for all $n$ sufficiently large, we see that
	\begin{equation}
	\label{eq:estimate-f-u}
	\int_{\Omega}f(u(t_n))u(t_n)\,dx\leq||\nabla u(t_n)||^2+||\nabla u(t_n)||.
	\end{equation}
	By this, we now claim the sequence $\{u(t_n)\}$ is bounded in $H_0^1(\Omega)$. Since
	$$J(u(t_n))=\frac12||\nabla u(t_n)||^2-\frac{1}{2p}\int_{\Omega}f(u(t_n))u(t_n)\,dx\leq J(u_0)<\infty,$$
	using (\ref{eq:estimate-f-u}), we deduce
	\begin{align*}
	||\nabla u(t_n)||^2&\leq 2J(u_0)+\frac{1}{p}\int_{\Omega}f(u(t_n))u(t_n)\,dx\\
	&\leq 2J(u_0)+\frac1p(||\nabla u(t_n)||^2+||\nabla u(t_n)||).
	\end{align*}
	This implies $||\nabla u(t_n)||^2\leq C$, and so $\{u(t_n)\}$ is indeed bounded.  
	
	Next, in view of (\ref{eq:derivative-J}), we obatin 
	\begin{equation}
	\label{eq:Jn-zero}
	R[J'(u(t_n))]=u(t_n)-R[f(u(t_n))]\to 0\ \ \mathrm{in}\ \,H_0^1(\Omega).
	\end{equation} 
	As proved in Theorem \ref{thm:solu-exist}, we know $f(u)\in L^2(\Omega)$ is continuous in $H_0^1(\Omega)$. Since the embedding $L^2(\Omega)\hookrightarrow H^{-1}(\Omega)$ is compact, the mapping: $u\mapsto R[f(u)]$ is also compact. Thus, by boundness of  $\{u(t_n)\}$, we may assume (passing to a subsequence if necessary) $R[f(u(t_n))]\to \varphi$ for some $\varphi\in H_0^1(\Omega)$. Then $u(t_n)\to \varphi$ due to (\ref{eq:Jn-zero}). Therefore, we find that the trajectory $\{S(t)u_0:t\geq 0\}$ is relatively compact in $H_0^1(\Omega)$.
	
	(ii). The first part of assertion (ii) is a standard consequence from (i) in the study of Dynamical systems (see Appendix G in \cite{Quittner2007}). More precisely, given the trajectory $\{S(t)u_0:t\geq 0\}$ is relatively compact in $H_0^1(\Omega)$, it is obvious that the $\omega$-limit set $\omega(u_0)$ is nonempty. Then using Proposition 53.3 in \cite{Quittner2007} we further have $\omega(u_0)$ is compact and connected. 
	
	We now prove the rest of assertion (ii). Assume $\varphi\in\omega(u_0)$, i.e. $\exists\, t_n\to\infty$ s.t. $u(t_n)\to\varphi$, we next show $\varphi$ is a solution of (\ref{eq:spro}). 
	
	Note that $J(u(t))$ is nonincreasing and bounded, hence $e:=\lim_{t\to\infty}J(u(t))$ exists, and then the continuity of $J(\cdot)$ implies $J(u(t_n))\to J(\varphi)=e$. Put $u(t_n+t):=S(t)u(t_n)$. Since for every $t\geq0$, 
	$$\int_0^t||u_t(t_n+s)||^2\,ds=J(u(t_n))-J(u(t_n+t))\to 0,$$
	we thus deduce that 
	$$u(t_n+t)=u(t_n)+\int_0^t u_t(t_n+s)\,ds\to \varphi$$
	as $t_n\to\infty$. It follows that  $S(t)\varphi=\varphi$, which implies $\varphi$ is a solution of problem (\ref{eq:spro}).
	
	This completes the proof. 
\end{proof}

\section{Solutions at high energy level}\label{sec:5}

In this section, we deal with the solution of (\ref{eq:pro}) at high energy level. 
Before giving the proof of Theorem \ref{thm:criterions}, we prepare several Lemmas.

\begin{lemma}
	Assume $1<p<p_{nl}$ and $a>d$, then $0<\lambda_a\leq\Lambda_a<\infty$.
\end{lemma}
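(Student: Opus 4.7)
The plan is to verify three facts separately: nonemptyness of $\mathcal{N}_a$ (so that $\lambda_a, \Lambda_a$ are well-defined and $\lambda_a\leq\Lambda_a$ is meaningful), $\Lambda_a<\infty$, and $\lambda_a>0$. For nonemptyness, Theorem \ref{thm:solu-exist} produces a positive classical solution $w$ of the stationary problem (\ref{eq:spro}); testing the equation against $w$ gives $\|\nabla w\|^2=\int_\Omega z(w)w^p\,dx$, i.e.\ $I(w)=0$, so $w\in\mathcal{N}$ and $d\leq J(w)<\infty$. Since $a>d$, the definition of $d$ as an infimum yields some $u\in\mathcal{N}$ with $J(u)<a$, hence $u\in\mathcal{N}_a$.

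The upper bound $\Lambda_a<\infty$ is immediate from the equivalent characterization of $\mathcal{N}_a$ displayed before the lemma: every $u\in\mathcal{N}_a$ satisfies $\|\nabla u\|<\sqrt{2pa/(p-1)}=:M$. The Poincar\'e inequality $\lambda\|u\|^2\leq\|\nabla u\|^2$ (with $\lambda$ the first Dirichlet eigenvalue of $-\Delta$) then gives $\|u\|\leq M/\sqrt{\lambda}$, so $\Lambda_a\leq M/\sqrt{\lambda}<\infty$.

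For the lower bound I would use $I(u)=0$ to write $\|\nabla u\|^2=\int_\Omega z(u)|u|^p\,dx$, then apply the Hardy--Littlewood--Sobolev inequality to get $\int_\Omega z(u)|u|^p\,dx\leq C\|u\|_{q_0}^{2p}$ with $q_0:=2np/(n+2)$. The assumption $p<p_{nl}$ ensures $q_0\leq 2^*=2n/(n-2)$, so Sobolev-type estimates apply. I split into two subcases. If $q_0\leq 2$ (equivalently $p\leq(n+2)/n$), H\"older's inequality on the bounded domain gives $\|u\|_{q_0}\leq C_\Omega\|u\|$, so $\|\nabla u\|^2\leq C\|u\|^{2p}$, and Poincar\'e forces $\|u\|^{2p-2}\geq\lambda/C$. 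If $q_0>2$, the Gagliardo--Nirenberg interpolation between $L^2$ and $L^{2^*}$ yields $\|u\|_{q_0}\leq C\|u\|^{\theta}\|\nabla u\|^{1-\theta}$ with $\theta=[(n+2)-p(n-2)]/(2p)\in(0,1)$. Substituting into the HLS bound and using $\|\nabla u\|\leq M$ (from $u\in\mathcal{N}_a$), we get $\|\nabla u\|^2\leq CM^{2p(1-\theta)}\|u\|^{2p\theta}$, and Poincar\'e then produces $\|u\|^{2p\theta-2}\geq c(a)>0$. Taking the infimum over $u\in\mathcal{N}_a$ yields $\lambda_a>0$.

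The main technical point is the exponent condition $2p\theta>2$ in the second subcase: a direct calculation shows this is equivalent to $p<n/(n-2)$, and this is precisely where the strict inequality $p<p_{nl}=2n/[(n-1)(n-2)]\leq n/(n-2)$ is essential. For $n=3$ the two bounds coincide, so the margin is delicate; without it, the $\|u\|^2$ term from Poincar\'e would exactly cancel against $\|u\|^{2p\theta}$ and the argument would fail to extract a positive lower bound on $\|u\|$.
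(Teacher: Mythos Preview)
Your argument is correct and follows the same skeleton as the paper's: write $\|\nabla u\|^2=\int_\Omega z(u)|u|^p\,dx$ from $I(u)=0$, bound the right side by $C\|u\|^\theta_{2}\|\nabla u\|^{2p(1-\theta)}$ via an integral inequality plus interpolation, and extract a lower bound on $\|u\|$. The one substantive difference is in the final step. You use the \emph{upper} bound $\|\nabla u\|<M=\sqrt{2pa/(p-1)}$ available on $\mathcal{N}_a$ together with Poincar\'e, which forces your lower bound on $\|u\|$ to depend on $a$. The paper instead observes that $d=\inf_{\mathcal N}J>0$ and (\ref{eq:J-I}) give a \emph{lower} bound $\|\nabla u\|^2\ge 2pd/(p-1)$ valid on all of $\mathcal N$, and plugs this into $\|\nabla u\|^{2-2p(1-\theta)}\le C\|u\|^{2p\theta}$ directly; this yields a bound uniform in $a$. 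Your route has the advantage that you explicitly verify the exponent condition $2p\theta>2$ (equivalently $p<n/(n-2)$, implied by $p<p_{nl}$), whereas the paper leaves the sign of its exponent $2-n(pq-2)/q$ implicit. You also address nonemptiness of $\mathcal N_a$, which the paper takes for granted. On the other hand, the paper reaches $\|u\|_{pq}^{2p}$ via Lemma~\ref{lem:estimation-f} (which already requires $q\ge n-1$, hence $p<p_{nl}$) rather than Hardy--Littlewood--Sobolev; your HLS step is slightly cleaner and in fact only needs $p<n/(n-2)$.
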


\begin{proof}
	Clearly, $\Lambda_a<\infty$ follows from the definition of $\mathcal{N}_a$ and Poincar\'e inequality.  Let us see $\lambda_a>0$ holds.
	
	Taking $q\geq n-1$ such that $2<pq<2n/(n-2)$ and using interpolation inequality and Sobolev inequality, we have 
	\begin{equation}
	\label{eq:inter-Sobo}
	||u||_{pq}^{2p}\leq ||u||^{2p\theta}||u||_{2n/(n-2)}^{2p(1-\theta)}\leq C_{n,p}||\nabla u||^{2p(1-\theta)}||u||^{2p\theta}
	\end{equation}
	for $u\in H_0^1(\Omega)$, where $\theta=1-\frac{n(pq-2)}{2pq}>0$.  Due to (\ref{eq:estimation-f-p}), we get $z(u)|u|^{p-1}u\in L^q(\Omega)$. If $u\in\mathcal{N}$, then by H\"older inequality we find 
	\begin{equation}
	\label{eq:Cauchy-z}
	||\nabla u||^2=\int_{\Omega}z(u)|u|^{p-2}uu\,dx\leq ||z(u)|u|^{p-1}u||_q\cdot||u||_{q'},
	\end{equation}
	where $1<q'=q/(q-1)\leq(n-1)/(n-2)\leq 2$ for $n\geq 3$. Using (\ref{eq:estimation-f-p}) and H\"older inequality again we have
	\begin{equation}
	\label{eq:Hold-z}
	||z(u)|u|^{p-1}u||_q\cdot||u||_{q'}\leq C(\Omega,n,p)||u||_{pq}^{2p-1}||u||_{pq}= C(\Omega,n,p)||u||_{pq}^{2p}.
	\end{equation}
	Thus, (\ref{eq:inter-Sobo}), (\ref{eq:Cauchy-z}) and (\ref{eq:Hold-z}) yield
	$$||\nabla u||^{2-n(pq-2)/q}\leq C||u||^{2p\theta}$$
	for all $u\in\mathcal{N}$. Noticing $d=\inf_{u\in\mathcal{N}}J(u)>0$, we get $||\nabla u||^2\geq \frac{2pd}{p-1}>0$ by (\ref{eq:J-I}). Hence, this proves $\lambda_a>0$.  
\end{proof}

\begin{lemma}[Theorem 6 in \cite{LiuB2014}]
	\label{lem:2-select}
	Let $u_0\in L^q(\Omega), n-1\leq q<\infty, q>\frac{n}{2}(p-1)(2-\frac{1}{p})$ and $u(t)$ be the classical $L^q$-solution on $[0,T_{\max})$. Then either $T_{\max}=+\infty$ or $\lim_{t\to T_{\max}}||u(t)||_q=+\infty$.
\end{lemma}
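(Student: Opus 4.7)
The plan is to prove the dichotomy via a continuation principle, whose engine is a local existence result whose existence time depends only on the initial $L^q$-norm. Given such a result, the conclusion is essentially immediate: if $T_{\max} < \infty$ and $\|u(t_n)\|_q \leq R$ along some sequence $t_n \nearrow T_{\max}$, then starting the Cauchy problem from $u(t_n)$ would yield a classical $L^q$-solution of length $\tau(R)$ independent of $n$; for $n$ large we have $t_n + \tau(R) > T_{\max}$, and uniqueness glues this new piece to $u$ past $T_{\max}$, contradicting maximality. Thus no subsequential limit of $\|u(t)\|_q$ can be finite, giving $\lim_{t\to T_{\max}} \|u(t)\|_q = +\infty$.

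The core work is therefore to produce the desired local existence result. I would set up the Duhamel formulation
\begin{equation*}
u(t) = e^{t\Delta} u_0 + \int_0^t e^{(t-s)\Delta}\bigl(z(u)|u|^{p-2}u\bigr)(s)\,ds
\end{equation*}
in a weighted Kato-type space $X_\tau = \{u \in C([0,\tau];L^q(\Omega)) : \sup_{0<t\leq\tau} t^{\sigma}\|u(t)\|_{pq} < +\infty\}$ with $\sigma := \tfrac{n}{2}\bigl(\tfrac{1}{q}-\tfrac{1}{pq}\bigr) = \tfrac{n(p-1)}{2pq}$. The free-term estimate $\|e^{t\Delta}u_0\|_{pq} \leq C t^{-\sigma}\|u_0\|_q$, combined with the pointwise bound $\|z(u)|u|^{p-2}u\|_q \leq C\|u\|_{pq}^{2p-1}$ coming from (\ref{eq:estimation-f-p}) and the Lipschitz inequality of Lemma \ref{lem:estimation-f}, reduces the contraction argument to the integrability condition $\sigma(2p-1) < 1$, which rearranges precisely to $q > \tfrac{n}{2}(p-1)\bigl(2-\tfrac{1}{p}\bigr)$; the auxiliary hypothesis $q \geq n-1$ is what legitimizes (\ref{eq:estimation-f-p}) in the first place. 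Standard Picard iteration then yields a solution on $[0, \tau(\|u_0\|_q)]$ together with uniqueness and continuous dependence, with $\tau$ a decreasing function of the $L^q$-norm alone.

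The main obstacle is therefore not the continuation step but this quantitative local theory: one must show that a solution in $L^q$ enjoys an existence time controlled only by $\|u_0\|_q$, not by some higher norm such as $\|u_0\|_{pq}$ or $\|u_0\|_{H_0^1}$. This is exactly where the instantaneous smoothing of the heat semigroup from $L^q$ into $L^{pq}$ is indispensable, and where the sharp interplay between the time singularity $t^{-\sigma}$ and the degree $2p-1$ of the nonlinear term forces the numerical constraint in the hypothesis. Once the uniform local theory is in hand, the continuation argument is routine, and the stated dichotomy follows as described.
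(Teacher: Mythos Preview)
Your proposal is sound, but note that the paper does not actually prove this lemma: it is quoted verbatim as Theorem~6 of \cite{LiuB2014} and used as a black box. There is therefore no in-paper argument to compare against.

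That said, your sketch is precisely the standard route and is almost certainly how the result is established in \cite{LiuB2014}. The continuation step is correct as stated. For the local theory, your choice of the Kato--Weissler weighted space with exponent $\sigma = \tfrac{n(p-1)}{2pq}$ is the right one, and your arithmetic checks: the contraction condition $(2p-1)\sigma < 1$ rearranges exactly to the hypothesis $q > \tfrac{n}{2}(p-1)(2-\tfrac{1}{p})$, while $q \ge n-1$ is needed to invoke Lemma~\ref{lem:estimation-f} and the bound~\eqref{eq:estimation-f-p}. One small point worth making explicit in a full write-up is that the Lipschitz estimate in Lemma~\ref{lem:estimation-f} involves, in the range $1<p<2$, a H\"older-type term $\|u-v\|_{pq}^{p-1}$ rather than a pure Lipschitz term; the contraction still closes because one works in a small ball, but the bookkeeping differs slightly from the $p\ge 2$ case. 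Apart from that, nothing is missing.
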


\begin{lemma}[Lemma 2.1 in \cite{LiLiu2016}]
	\label{lem:2.1}
	Let $1<p<\frac{n+2}{n-2}$. Then there is a contant $C_{n, p,\Omega}>0$ satisfies that $||\nabla u||\geq C_{n, p,\Omega}$ for all $u\in H_0^1(\Omega)\backslash\{0\}$ and $I(u)\leq 0$.
\end{lemma}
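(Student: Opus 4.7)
The plan is to combine $I(u)\leq 0$ with the Hardy--Littlewood--Sobolev (HLS) inequality and the Sobolev embedding $H_0^1(\Omega)\hookrightarrow L^{2n/(n-2)}(\Omega)$, and then extract the lower bound from the fact that $p>1$.

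First, from $I(u)\leq 0$ and the definition of $I$, I would write
\begin{equation*}
\|\nabla u\|^2 \;\leq\; \int_{\Omega}z(u)\,|u|^p\,dx \;=\; \iint_{\Omega\times\Omega}\frac{|u(x)|^p |u(y)|^p}{|x-y|^{n-2}}\,dx\,dy.
\end{equation*}
Next I would apply HLS with $\lambda=n-2$: since $2/r + (n-2)/n = 2$ forces $r = 2n/(n+2)$, HLS gives
\begin{equation*}
\iint_{\Omega\times\Omega}\frac{|u(x)|^p |u(y)|^p}{|x-y|^{n-2}}\,dx\,dy \;\leq\; C_{n}\bigl\||u|^p\bigr\|_{2n/(n+2)}^{2} \;=\; C_{n}\,\|u\|_{2np/(n+2)}^{2p}.
\end{equation*}

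Then I would use that $p<(n+2)/(n-2)$ is exactly the condition $2np/(n+2)\leq 2n/(n-2)$, so by H\"older on the bounded domain $\Omega$ together with the Sobolev embedding $H_0^1(\Omega)\hookrightarrow L^{2n/(n-2)}(\Omega)$ I can control
\begin{equation*}
\|u\|_{2np/(n+2)} \;\leq\; C(\Omega)\,\|u\|_{2n/(n-2)} \;\leq\; C(n,\Omega)\,\|\nabla u\|.
\end{equation*}
Combining the three displays yields $\|\nabla u\|^2 \leq C_{n,p,\Omega}\,\|\nabla u\|^{2p}$. Since $u\not\equiv 0$ and the Poincar\'e inequality guarantees $\|\nabla u\|>0$, I may divide by $\|\nabla u\|^2$ and invoke $p>1$ to conclude
\begin{equation*}
\|\nabla u\| \;\geq\; C_{n,p,\Omega}^{-1/(2p-2)} \;=:\; C_{n,p,\Omega}>0.
\end{equation*}

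There is no serious obstacle: the argument is a direct concatenation of HLS, Sobolev, and the hypothesis $I(u)\leq 0$. The only point that needs care is verifying the HLS exponent compatibility (which forces precisely $r=2n/(n+2)$) and checking that the subcritical assumption $p<(n+2)/(n-2)$ is what allows the final Sobolev step; the fact that $p>1$ is essential only at the very last division, where it makes the exponent $2p-2$ strictly positive so that a genuine lower bound (not a tautology) emerges.
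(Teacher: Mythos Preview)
Your argument is correct. The chain HLS $\to$ Sobolev embedding $\to$ cancellation of $\|\nabla u\|^2$ is exactly the standard route, and every exponent check (the HLS pair $r=2n/(n+2)$, the subcritical condition $2np/(n+2)\le 2n/(n-2)$, and $2p-2>0$) is accurate.

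Note, however, that the paper itself does \emph{not} prove this lemma: it is imported verbatim from \cite{LiLiu2016}, so there is no ``paper's own proof'' to compare with here. That said, a closely related lower bound does appear inside the proof of the lemma on $\lambda_a$ and $\Lambda_a$ (the one preceding Lemma~\ref{lem:2-select}): there, for $u\in\mathcal N$ the authors use the identity \eqref{eq:J-I} together with $d=\inf_{\mathcal N}J>0$ to get $\|\nabla u\|^2\ge \tfrac{2pd}{p-1}$ directly. Your HLS/Sobolev proof has the advantage of covering the full set $\{I(u)\le 0\}$ rather than only $\mathcal N$, and of giving a constant depending only on $(n,p,\Omega)$ without invoking the positivity of $d$.
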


Now, we state the proof of Theorem \ref{thm:criterions}. The key idea is to verify the sign of Nehari functional for corresponding solutions of classes of the initial value ($\mathcal{G}_0$ or $\mathcal{B}$) is invariant, which just like the arguments of potential well established for low initial energy case. 
\begin{proof}[Proof of Theorem \ref{thm:criterions}]
	Put $u(t):=S(t)u_0$ for $t\in [0,T_{\max}(u_0))$. Assume first that $u_0\in\mathcal{N}_{+}$ satisfies $||u_0||\leq\lambda_{J(u_0)}$. We claim that $u(t)\in\mathcal{N}_{+}$ for all $t\in [0,T_{\max}(u_0))$ by contradiction. If there is $s$ such that $u(t)\in\mathcal{N}_{+}$ for $0\leq t<s$ and $u(s)\in\mathcal{N}$, notice that $\frac{d}{dt}||u(t)||^2=-2I(u(t))<0$ for $0\leq t<s$, then we get 
	$$||u(s)||<||u_0||\leq\lambda_{J(u_0)},\quad J(u(s))\leq J(u_0).$$
	This contradicts the definition of $\lambda_{J(u_0)}$ and proves the claim. Thus, for $t\in [0,T_{\max}(u_0))$, using (\ref{eq:J-I}) and $u(t)\in\mathcal{N}_{+}$ we have
	$$\frac{p-1}{2p}||\nabla u(t)||^2\leq J(u(t))<J(u_0),$$
	which shows that the solution $u(t)$ remains bounded in $H_0^1(\Omega)$. For those $q$ satisfies $n-1\leq q\leq \frac{2n}{n-2}$ ($n=3$ or $4$), and  $\frac{n}{2}(p-1)(2-\frac{1}{p})<q<\frac{2n}{n-2}$, we have $||u(t)||_{L^q(\Omega)}$ is bounded, by using the Sobolev inequality.  Hence applying Lemma \ref{lem:2-select}, we deduce $T_{\max}(u_0)=\infty$. Now for every $\varphi\in\omega(u_0)$, we obtain
	$$||\varphi||<||u_0||\leq\lambda_{J(u_0)}\quad \mathrm{and}\quad J(\varphi)<J(u_0).$$
	Consequently, $\varphi\not\in\mathcal{N}$ follows by definition of $\lambda_{J(u_0)}$. On the other hand, from Theorem \ref{thm:equilibria}, we know $\varphi$ is an equilibrium, which implies $I(\varphi)=0$. Hence $\varphi=0$ holds only in such situation, showing that $u_0\in\mathcal{G}_0$ as asserted.
	
	Next, assume that $u_0\in\mathcal{N}_{-}$ satisfies $||u_0||\geq\Lambda_{J(u_0)}$. By a similar argument as above, we see that $u(t)\in\mathcal{N}_{-}$ for all $t\in [0,T_{\max}(u_0))$. Now, suppose to contrary that $T_{\max}(u_0)=\infty$. Since $\frac{d}{dt}||u(t)||^2=-2I(u(t))>0$ for $t\geq 0$, for every $\varphi\in\omega(u_0)$ we infer that
	$$||\varphi||>||u_0||\geq\Lambda_{J(u_0)}\quad \mathrm{and}\quad J(\varphi)<J(u_0).$$ 
	Hence, we obtain $\varphi\not\in\mathcal{N}$ by definition of $\Lambda_{J(u_0)}$. In addition, we have $||\nabla u(t)||\geq C$ for any $t>0$ from Lemma \ref{lem:2.1}, which implies $||\nabla\varphi||\geq C$. Thus we get $0\not\in\omega(u_0)$ implying $\omega(u_0)=\varnothing$. It contradicts the assumption that $T_{\max}(u_0)=\infty$. Consequently, we conclude that $T_{\max}(u_0)<\infty$ i.e. $u_0\in\mathcal{B}$.  
\end{proof}

In order to prove Theorem \ref{thm:blow-up for M}, we give a criterion for blow-up as follows.

\begin{lemma}
	\label{lem:blowup-N-}
	Assume $2<p<p_{nl}$. If $u_0\in C(\bar{\Omega})\cap H_0^1(\Omega)\setminus\{0\}$ satisfies 
	\begin{equation}
	\label{eq:blow-up}
	||u_0||^{2p}\geq \frac{2p}{p-1}|\Omega|^{p-2}\gamma^{n-2}J(u_0),
	\end{equation}
	where $\gamma=diam (\Omega):=\sup_{x,y\in\Omega}|x-y|$. Then $u_0\in\mathcal{N}_{-}\cap\mathcal{B}$. 
\end{lemma}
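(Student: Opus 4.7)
My plan splits into three steps: (i) verify $I(u_0)<0$ directly from the hypothesis; (ii) propagate this sign along the flow so that $||u(t)||$ strictly increases; (iii) derive an autonomous ODE-type inequality that forces finite-time blow-up of the $L^2$ norm. For step (i), the key ingredient is a lower bound on the nonlocal term: since $|x-y|\leq\gamma$ for all $x,y\in\Omega$,
\[
\int_\Omega z(u)|u|^p\,dx \;\geq\; \gamma^{-(n-2)}\,||u||_p^{2p},
\]
and because $p>2$, H\"older's inequality yields $||u||_p^{2p}\geq ||u||^{2p}/|\Omega|^{p-2}$, with \emph{strict} inequality whenever $u\in H_0^1(\Omega)\setminus\{0\}$ (equality would force $|u|$ to be a.e.\ constant, incompatible with vanishing trace). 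Combining this with the identity $I(u)=2J(u)-\tfrac{p-1}{p}\int z(u)|u|^p\,dx$ and the hypothesis then gives $I(u_0)<0$.

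For step (ii), suppose for contradiction that there is a first time $t^*\in(0,T_{\max})$ at which $I(u(t^*))=0$. On $[0,t^*)$ one has $\tfrac{d}{dt}||u(t)||^2=-2I(u(t))>0$, so $||u(t^*)||>||u_0||$; together with $J(u(t^*))\leq J(u_0)$ this means $u(t^*)$ still satisfies the lemma's hypothesis in a strict form, whence step (i) applied to $u(t^*)$ yields $I(u(t^*))<0$, contradicting the choice of $t^*$. So $u(t)\in\mathcal{N}_-$ and $||u(t)||^2$ is strictly increasing on the whole maximal interval.

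For step (iii), assume for contradiction $T_{\max}=\infty$ and set $M(t):=\int_0^t||u(\tau)||^2\,d\tau$, so $M'(t)=||u(t)||^2$ and $M''(t)=-2I(u(t))$. Using $-2I(u)=\tfrac{2(p-1)}{p}\int z(u)|u|^p\,dx-4J(u)$ together with the lower bound of step (i) and $J(u(t))\leq J(u_0)$, one arrives at
\[
M''(t)\;\geq\; c\,(M'(t))^p-C,\qquad c:=\tfrac{2(p-1)}{p\,|\Omega|^{p-2}\gamma^{n-2}},\;\; C:=4J(u_0),
\]
and the hypothesis gives $c\,(M'(0))^p\geq C$. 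Since $M'$ is strictly increasing (step (ii)), for any $\varepsilon>0$ one has $M'(t)>M'(0)$ on $[\varepsilon,\infty)$, so the right-hand side is bounded below by a strictly positive constant there, forcing $M'(t)\to\infty$. Once $M'(t)$ is large enough that $c(M'(t))^p\geq 2C$, the inequality reads $M''\geq\tfrac{c}{2}(M')^p$, and integrating $\tfrac{d}{dt}\bigl[-(p-1)^{-1}(M')^{-(p-1)}\bigr]\geq \tfrac{c}{2}$ shows $M'(t)$ diverges at a finite time. Since $L^q(\Omega)\hookrightarrow L^2(\Omega)$ on the bounded domain $\Omega$, $||u(t)||_q$ also blows up at that same finite time for a $q$ satisfying the assumptions of Lemma~\ref{lem:2-select}, contradicting $T_{\max}=\infty$. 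The main obstacle is step (iii): the hypothesis only forces $M''(0)\geq 0$, not strictly positive, so I must exploit the \emph{strict} monotonicity from step (ii) (together with the strictness of H\"older from step (i) in the $J(u_0)>0$ case) to upgrade the differential inequality to a genuine superlinear ODE bound that actually blows up.
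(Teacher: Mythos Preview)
Your argument is correct. Step (i) coincides with the paper's: the same diameter bound and the strict H\"older inequality $||u||^{2p}<|\Omega|^{p-2}||u||_p^{2p}$ (valid since a nonzero $H_0^1$-function cannot be a.e.\ constant) give $I(u_0)<0$.

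For the blow-up part the paper takes a different route. Rather than propagating the sign of $I$ directly and deriving the superlinear ODE $M''\geq c(M')^p-4J(u_0)$, the paper observes that for any $u\in\mathcal{N}_{J(u_0)}$ one has
\[
|\Omega|^{2-p}\gamma^{2-n}\,||u||^{2p}\ \leq\ \int_\Omega z(u)|u|^p\,dx\ =\ ||\nabla u||^2\ \leq\ \frac{2p}{p-1}\,J(u_0),
\]
so that $\Lambda_{J(u_0)}^{2p}\leq \tfrac{2p}{p-1}|\Omega|^{p-2}\gamma^{n-2}J(u_0)$; the hypothesis then gives $||u_0||\geq\Lambda_{J(u_0)}$, and blow-up follows from Theorem~\ref{thm:criterions}. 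That route is shorter and conceptually ties the lemma back to the general high-energy criterion, but it relies on the $\omega$-limit set machinery of Theorem~\ref{thm:equilibria} (relative compactness of trajectories, convergence to equilibria). Your ODE argument in step~(iii) is more self-contained: once $M'$ is strictly increasing by step~(ii), the differential inequality $M''\geq c(M')^p-C$ with $c(M'(0))^p\geq C$ forces $M'\to\infty$ linearly and then blows up in finite time via $M''\geq\tfrac{c}{2}(M')^p$. Two minor remarks: the appeal to $L^q\hookrightarrow L^2$ and Lemma~\ref{lem:2-select} at the very end is unnecessary---under the contradiction hypothesis $T_{\max}=\infty$, the quantity $M'(t)=||u(t)||^2$ is finite for every $t$, and its finite-time blow-up is already the desired contradiction; and when $J(u_0)\leq 0$ the inequality $M''\geq c(M')^p$ holds outright, so no bootstrapping is needed in that case.
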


\begin{proof}
	First, by using the definition of $\gamma$, we know that
	$$
	\int_{\Omega}z(u)|u|^p\,dx:=\int_{\Omega}\left(\int_{\Omega}\frac{|u(y)|^p}{|x-y|^{n-2}}\,dy\right)|u(x)|^p\,dx\geq\gamma^{2-n}||u||_p^{2p}, \forall u\in H_0^1(\Omega).
	$$
	The H\"older inequality gives 
	\begin{equation}
	\label{eq:Hold-2p}
	|\Omega|^{p-2}\gamma^{n-2}\int_{\Omega}z(u)|u|^p\,dx \geq|\Omega|^{p-2}||u||_p^{2p}>||u||^{2p}, \forall u\in H_0^1(\Omega).
	\end{equation}
	For each $u_0$ satisfies (\ref{eq:blow-up}), we have
	\begin{equation}
	\label{eq:J-u}
	||u_0||^{2p}\geq\frac{2p}{p-1}|\Omega|^{p-2}\gamma^{n-2}\left(\frac12||\nabla u_0||^2-\frac{1}{2p}\int_{\Omega}z(u_0)|u_0|^p\,dx\right)
	\end{equation}
	Combining (\ref{eq:Hold-2p}) with (\ref{eq:J-u}), we deduce that $\int_{\Omega}z(u_0)|u_0|^p\,dx>||\nabla u_0||^2$, namely, $u_0\in\mathcal{N}_{-}$. 
	
	Furthermore, by H\"older inequality, for any $u\in\mathcal{N}_{J(u_0)}$, we have
	$$|\Omega|^{2-p}\gamma^{2-n}||u||^{2p}\leq\gamma^{2-n}||u||_p^{2p}\leq\int_{\Omega}z(u)|u|^p\,dx=||\nabla u||^2\leq \frac{2p}{p-1}J(u_0).$$
	Hence, we immediately infer that 
	$$\Lambda_{J(u_0)}^{2p}\leq \frac{2p}{p-1}|\Omega|^{p-2}\gamma^{n-2}J(u_0).$$
	Therefore, if $u_0$ fulfills (\ref{eq:blow-up}), then $||u_0||\geq\Lambda_{J(u_0)}$ and Theorem \ref{thm:criterions} shows that $u_0\in\mathcal{B}$. 
\end{proof}

We now give the proof of Theorem \ref{thm:blow-up for M}.
\begin{proof}[Proof of Theorem \ref{thm:blow-up for M}]
	Let $M>0$, and let $\Omega_1,\Omega_2$ be two arbitrary disjoint open subdomains of $\Omega$. We choose an arbitrary nonzero function $v\in H_0^1(\Omega_1)\subset H_0^1(\Omega)$. Since 
	$$
	J(\alpha v)=\frac{1}{2}\alpha^2||\nabla v||^2-\frac{1}{2p}\alpha^{2p}\int_{\Omega}z(v)|v|^p\,dx, \forall \alpha \in \mathbb{R}.
	$$ 
	It follows that for sufficiently large $\alpha$
	$$||\alpha v||^{2p}\geq \frac{2p}{p-1}|\Omega|^{p-2}\gamma^{n-2}M,\quad\mathrm{and}\quad J(\alpha v)\leq 0.$$ 
	Then there exists $w\in H_0^1(\Omega_2)$ satisfying 
	$$X(w):=J(w)-\frac{1}{p}\int_{\Omega\times \Omega}\frac{|w(y)|^p|\alpha v(x)|^p}{|x-y|^{n-2}}dxdy= M-J(\alpha v).$$
	
	In fact, for $\alpha>0$, pick a function $\phi_k\in C_0^1(\Omega_2)$ such that
	\begin{equation}
	\label{eq:norm-w-k}
	||\nabla\phi_k||\geq k, \quad ||\phi_k||_\infty\leq c_0.
	\end{equation}
	Applying Hardy-Littlewood-Sobolev inequality and H\"older inequality, we find that 
	\begin{align}
	\int_{\Omega\times \Omega}\frac{|\phi_k(y)|^p|\phi_k(x)|^p}{|x-y|^{n-2}}dxdy&\leq C_{n,p}||\phi_k||_{\frac{2np}{n+2}}^{2p} \notag\\
	&\leq C_{n,p} c_0^{2p}|\Omega_2|^{\frac{n+2}{n}}.\label{eq:integration-w-k}
	\end{align}
	Similarly, we also have
	\begin{align}
	\int_{\Omega\times \Omega}\frac{|\phi_k(y)|^p|\alpha v(x)|^p}{|x-y|^{n-2}}dxdy&\leq C_{n,p,\Omega}||\phi_k||_{\frac{2np}{n+2}}^p||\alpha v||_{\frac{2np}{n+2}}^p\notag\\
	&\leq C_{n,p,\Omega}c_0^p|\Omega_2|^{\frac{n+2}{2n}}||\alpha v||_{\frac{2np}{n+2}}^p. \label{eq:int-w-v}
	\end{align}
	A direct computation shows
	\begin{eqnarray*}
		X(\phi_k)&=& J(\phi_k)-\frac{1}{p}\int_{\Omega\times \Omega}\frac{|\phi_k(y)|^p|\alpha v(x)|^p}{|x-y|^{n-2}}dxdy\\
		&=& \frac{1}{2}||\nabla \phi_k||^2-\frac{1}{2p}\int_{\Omega\times \Omega}\frac{|\phi_k(y)|^p|\phi_k(x)|^p}{|x-y|^{n-2}}dxdy-\frac{1}{p}\int_{\Omega\times \Omega}\frac{|\phi_k(y)|^p|\alpha v(x)|^p}{|x-y|^{n-2}}dxdy\\
		&\geq &  \frac{1}{2}||\nabla \phi_k||^2-C_{n,p} c_0^{2p}|\Omega_2|^{\frac{n+2}{n}}-C_{n,p,\Omega}c_0^p|\Omega_2|^{\frac{n+2}{2n}}||\alpha v||_{\frac{2np}{n+2}}^p.
	\end{eqnarray*}
	By choosing $k>0$ large enough, we obtain $w=\phi_k$.

	Now, denote $u_M:=\alpha v+w$, then 
	$$J(u_M)=J(\alpha v)+J(w)-\frac{1}{p}\int_{\Omega\times \Omega}\frac{|w(y)|^p|\alpha v(x)|^p}{|x-y|^{n-2}}dxdy= M$$ 
	and 
	$$||u_M||^{2p}\geq ||\alpha v||^{2p}\geq \frac{2p}{p-1}|\Omega|^{p-2}\gamma^{n-2}M= \frac{2p}{p-1}|\Omega|^{p-2}\gamma^{n-2}J(u_M).$$
	Thus, by Lemma \ref{lem:blowup-N-} we have $u_M\in\mathcal{N}_{-}\cap\mathcal{B}$. This completes our proof.
\end{proof}

%%%%%%%%%%%%%%%%%%%%%%%%%%%%%%%%%%%%%%%%%%%%%%%%%%%%%%%%%%%%%%%%%%%%%%%%%%%%%%%%

\end{document}